\newtheorem{theorem}{Theorem}[section]
\newtheorem{lemma}[theorem]{Lemma}
\newtheorem{definition}[theorem]{Definition}
\newtheorem{corollary}{Corollary}[theorem]
\theoremstyle{definition}
\numberwithin{equation}{section}
\newcommand{\R}{\mathbb{R}}
\newcommand{\C}{\mathbb{C}}
\newcommand{\Z}{\mathbb{Z}}
\newcommand{\N}{\mathbb{N}}
\newcommand{\eps}{\epsilon}
\newcommand{\la}{\lambda}
\newcommand{\Pb}[1]{\mathbb{P}\left[#1\right]}
\newcommand{\E}[1]{\mathbb{E}\left[#1\right]}
\newcommand{\ET}[1]{\mathbb{E}_{\Theta}\left[#1\right]}
\newcommand{\PT}[1]{\mathbb{P}_{\Theta}\left[#1\right]}
\newcommand{\EF}[1]{\mathbb{E}_{\mathrm{F}}\left[#1\right]}
\newcommand{\PF}[1]{\mathbb{P}_{\mathrm{F}}\left[#1\right]}
\newcommand{\slan}{\sum_{\la \vdash n}\frac{1}{z_\la}}
\newcommand{\sla}{\sum_{\la}\frac{1}{z_\la}}
\newcommand{\nth}[1]{[t^n]\left[ #1 \right]}
\newcommand{\set}[1]{\left\{#1\right\}}
\renewcommand{\Re}{\mathrm{Re}}
\begin{document}
\title[The generalized weighted probability measure]{The generalized weighted probability measure on the symmetric group and the asymptotic behavior of the cycles}
%

%
%

\author[A. Nikeghbali]{Ashkan Nikeghbali}
\address{Institut f\"ur Mathematik\\ Universit\"at Z\"urich\\ Winterthurerstrasse 190\\ 8057-Z\"urich,
Switzerland}
\email{ashkan.nikeghbali@math.uzh.ch}

\author[D. Zeindler]{Dirk Zeindler}
\address{Department of Mathematics\\University of York\\York\\YO10
5DD\\United Kingdom}
\email{dz549@york.ac.uk}

\maketitle

\begin{abstract}
The goal of this paper is to analyse the asymptotic behavior of the cycle process and the total number of cycles of weighted and generalized weighted random permutations  which are relevant models in physics and which extend the Ewens measure. We combine tools from combinatorics and complex analysis (e.g. singularity analysis of generating functions) to prove that under some analytic  conditions (on relevant generating functions) the  cycle process converges to a vector of independent Poisson variables and to establish a central limit theorem for the total number of cycles. Our methods allow us to obtain an asymptotic estimate of the characteristic functions of the different random vectors of interest together with an error estimate, thus having a control on the speed of convergence. In fact we are able to prove a finer convergence for the total number of cycles, namely \textit{mod-Poisson convergence}. From there we apply previous results on mod-Poisson convergence to obtain Poisson approximation for the total number of cycles as well as large deviations estimates. 
\end{abstract}

\tableofcontents
\newpage
%
\section{Introduction}
\label{sec_introduction}

In this paper we are interested in finding the asymptotic behaviour of the cycle structure and the number of cycles of  weighted random permutations which appear in mathematical biology and in theoretical physics. More precisely, we define the weighted and generalized weighted probability measures on the group of permutations $S_n$ of order $n$ as follows:

\begin{definition}
\label{def_weighted_probabililty_measure}
Let $\sigma\in S_n$ be given.
We write $C_j(\sigma)$ for the number of cycles of length $j$ in the decomposition of $\sigma$ as a product of disjoint cycles (see also Definition~\ref{def_Cm_and_K0n}). 

\begin{enumerate}
\item Let $\Theta = \left(\theta_m  \right)_{m\geq1}$ be given, with $\theta_j\geq0$ for every $j\geq 1$.
We define the generalized weighted measures as
%
\begin{align}
  \PT{\sigma}
  :=
  \frac{1}{h_n n!} \prod_{m=1}^n \theta_m^{C_m(\sigma)}
  \label{eq_PTheta_with_partition}
\end{align}
with $h_n = h_n(\Theta)$ a normalization constant and $h_0:=1$.
\item More generally, we define the generalized weighted measures as follows.   Let $F_m: \N \to \R_{>0}$  be given for $m\geq 1$ with $F_m(0)=1$. We then define
    \begin{align}
      \PF{\sigma}
      :=
      \frac{1}{n! h_n(F)} \prod_{m=1}^n F_m \bigl(C_m(\sigma) \bigr)
    \end{align}
    with $h_n(F)$ a normalization constant. It follows immediately from the definition that $\PT{.} = \PF{.}$ with $F_m(k) = \theta_m^k$.

\end{enumerate}

\end{definition}
When $\theta_j=1$ for all $j\geq 1$, the measure $\PT{.}$ is the uniform measure on permutations and these are well studied objects with  a long history (see e.g. the monograph \cite{barbour} for a complete account). In particular it was proven by Goncharov (\cite{Goncharov}) and Shepp and Loyd (\cite{Shepp}) that the process of cycle counts converges in distribution to a Poisson process, that is as $n\to\infty$,
\begin{equation}\label{blabla1}
\left( C_1^{(n)},C_2^{(n)},\ldots\right)\to^d (Z_1,Z_2,\ldots),
\end{equation}

where  the $Z_j$ are independent Poisson distributed random variables with $\mathbb{E}[Z_j]=\dfrac{1}{j}.$ There have also been further studies to analyze the rate of convergence for $$\mathbb{P}\left[ \left( C_1^{(n)},\ldots,C_b^{(n)}\right)\right]\to\mathbb{P}\left[ \left( Z_1,\ldots,Z_b\right)\right], n\to\infty$$where $b$ is a fixed integer. The above can be interpreted as a result on small cycles. There exist as well results  on large cycles, due to Kingman (\cite{Kingman}) and Vershik and Shmidt (\cite{Vershik}) who prove that the vector of renormalized and ordered length of the cycles converges in law to a Poisson-Dirichlet distribution with parameter $1$. Moreover if one notes $$K_{0n}=C_1^{(n)}+\ldots+C_n^{(n)}$$the number of cycles, then the distribution of $K_{0n}$ is well known and a central limit theorem can be proven:
\begin{equation}\label{blabla2}
\dfrac{K_{0n}-\log n}{\sqrt{\log n}}\to^d\mathcal{N}(0,1),
\end{equation}
where $\mathcal{N}(0,1)$ stands for a standard Gaussian random variable. In fact one can prove Poisson approximation results for $K_{0n}-1$ as well as large deviations estimates. For instance Hwang (\cite{Hwang95}) showed that for $k\sim x\log n$,
$$\mathbb{P}[K_{0n}=k]=\dfrac{(\log n)^{k-1}\exp(-\log n)}{(k-1)!}\left(\dfrac{1}{\Gamma(1+r)}+O\left(\dfrac{k}{\left(\log n\right)^2}\right) \right),$$where $r=(k-1)/\log n$.

Similar results exist if one considers the more general Ewens measure corresponding to $\theta_j=\theta>0$ for all $j$ in equation (\ref{eq_PTheta_with_partition}) defining $\PT{.}$. This measure was introduced by Ewens (\cite{Ewens}) to study population dynamics and has received much attention since. In particular (\ref{blabla1}) and (\ref{blabla2}) hold with $\mathbb{E}[Z_j]=\frac{\theta}{j}$ and $\log n$ replaced with $\theta \log n$ in the central limit theorem. Estimates on the rate of convergence as well as Poisson approximation results for $K_{0n}$ area available as well.

The measure $\PT{.} $ is thus  a natural extension of the Ewens measure and besides has a physical interpretation: indeed such a model appears in the study of the quantum Bose gas in statistical mechanics (see \cite{BU}, \cite{BVUD} and \cite{BUcmp}). There it is of interest to understand the structure of the cycles when the asymptotic behaviour of $\theta_j$ is fixed.  The case where $\theta_j\to\theta$, i.e. asymptotically the Ewens case, was also considered in \cite{random_permutations_with_cycle_weight}. Another random variable of interest that we shall not consider in this paper  is $L_1$, the length of the cycle that contains $1$, which can be interpreted as giving the length of a typical cycle, has also been considered in some of the above mentioned papers. It appeared in these works that obtaining the convergence in distribution of the cycle process or the central limit theorem for the number of cycles is a challenging problem. Indeed, there does not exist something such as the Feller coupling for the random permutations under the measure $\PT{.} $, since in general these measures do not possess any compatibility property between the different dimensions. The main important property of $\PT{.} $ is that it is invariant on conjugacy classes, and we shall exploit this fact.

In a recent skillful paper, Ercolani and Ueltschi (\cite{cycle_structure_ueltschi}) have obtained, under  a variety of assumptions on the asymptotic properties of the $\theta_j$'s, the convergence of the cycle process to a Poisson vector, as in (\ref{blabla1}), with this time $\mathbb{E}[Z_j]=\frac{\theta_j}{j}$. In some cases they obtained an equivalent for $\mathbb{E}[K_{0n}]$ and in some "degenerate cases" they proved that the total number of cycles converges in distribution to $1$ or to $1$ plus a Poisson random variable. But their method which is a subtle saddle point analysis of generating functions does not give any information on the asymptotic behaviour of the different random variables under consideration nor on the rate of convergence. But on the other hand because the method is general, they are able to cover concrete cases corresponding to a large variety of assumptions on the asymptotic behaviour of the $\theta_j$'s.

The goal of this paper is to bring a complementary point of view to the approach of  Ercolani and Ueltschi (\cite{cycle_structure_ueltschi}) and to provide sufficient conditions on the $\theta_j$'s, or more precisely on the analyticity properties of the generating series of $(\theta_m)_{m\geq1}$, under which one has (\ref{blabla1}), (\ref{blabla2}), estimates on the rate of convergence, as well as  Poisson approximation and large deviations estimates for the total number of cycles $K_{0n}$.

Our approach is based on the so called singularity analysis of the generating series of $(\theta_n)_{n\geq1}$ and is general enough to deal with the case of the more general measures $\PF{.}$. The starting point is the well known relation $$\sum_{n=1}^\infty h_n t^n=\exp \left( g(t)\right), \text{ with } g(t)=\sum_{n=1}^\infty \dfrac{\theta_n}{n} t^n $$which relates the generating series of the sequence $(h_n)_{n\geq1}$ to that of the sequence $(\theta_n)_{n\geq1}$, and its generalized version for the measure $\PF{.}$ (in fact these formulas will follow from some combinatorial lemmas which are useful to compute expectations or statistics-e.g. the characteristic function- under $\PF{.}$). To obtain an asymptotic for  $h_n$ as well as an estimate for the characteristic functions of the different random variables under consideration, it will reveal crucial to extract precise asymptotic  information with an error term on the coefficient of $[t^n][F(t,w)]$  for $F(t,w)=\exp(wg(t))S(t,w)$ where $S(t,w)$ is some holomorphic function in a domain containing $\{(t,w)\in \mathbb{C}^2; |t|\leq r, |w|\leq \hat{r}\}$ where $r$ is the radius of convergence of the generating series $g(t)$. This will reveal possible if one makes some assumptions on the analyticity property of the generating series $g(t)$ together with assumptions on the nature of its singularities at the point $r$ on the circle of convergence. Several results of this nature exist in the literature (see e.g. the monograph \cite{FlSe09}) but the relevant ones for us are due to Hwang (\cite{MR1724562} and \cite{hwang_thesis}).  Combining these results with some combinatorial lemmas, we are able to show that the cycle count process converges in law to a vector of Poisson process as in (\ref{blabla1}), with this time $\mathbb{E}[Z_{j}]=\frac{\theta_{j} r^{j}}{j}$, where we recall that $r$ is the radius of convergence of the generating series $g(t)$ of the sequence $(\theta_{n})_{{n\geq0}}$. We also have a an estimate for the speed of convergence. We are also able to prove a central limit theorem for $K_{{0n}}$, as well as Poisson approximation results and large deviations estimates. In fact our methods allow us to prove a stronger convergence result than the central limit theorem, namely \textit{mod-Poisson convergence}. This type of convergence  together with mod-Gaussian convergence was introduced in \cite{JKN} and further developed in \cite{Ashkan_mod_poisson}. It can be viewed as a higher order central limit theorem from which one can deduce many relevant information (in particular the central limit theorem).

More precisely the paper is organized as follows:
\begin{itemize}
\item In Section \ref{sec_preparation} we establish some basic combinatorial lemmas and review some facts from complex analysis (e.g. singularity analysis theorems and the Lindel\"of integral representation theorem) from which we deduce the asymptotic behaviour of the normalization constant $h_{n}$ (under some assumptions on the generating function $g(t)$).
\item In Section \ref{sec_central_limit_number_cycles} we prove the Poisson convergence for the cycle process together with a rate of convergence;
\item Section \ref{sec_central_limit_tot_number_cycles} is devoted to various limit theorems for the total number of cycles $K_{0n}$;
\item Section \ref{sec_examples} contains some examples;
\item In Section \ref{sec_more_general_measures}  we prove general limit theorems under the more general measure $\PF{.}$. We shall illustrate these results with the example of exp-polynomial weights and a toy example of spatial random  permutations which plays an important role in physics (see e.g. \cite{spatial_random_permutations}). Here we consider the simpler case where the lattice is fixed. It is our hope that our methods can be adapted to deal with more complicated cases where the lattice is not fixed anymore.
\end{itemize}

\section{Combinatorics and singularity analysis}
\label{sec_preparation}
\subsection{Combinatorics of $S_{n}$ and generating functions}\label{sec_conj_class_of_Sn}

We recall  in this section some basic facts about $S_n$ and partitions, and at the end of the section state a useful lemma to perform averages on the symmetric group. We only give here  a very short overview and refer to \cite{barbour} and \cite{macdonald} for more details.

We first analyse the conjugation classes of $S_n$ since
all probability measures and functions considered in this paper are invariant under conjugation. It is well known that the conjugation classes of $S_n$ can be parametrized with partitions of $n$.
\begin{definition}
 \label{def_part}
A \emph{partition} $\la$ is a sequence of non-negative integers $\la_1 \ge \la_2 \ge \cdots$ eventually trailing to 0's, which we usually omit. We use the notation $\la=(\la_1,\la_2,\cdots,\la_l)$. \\
The \emph{length} $l(\la)$ of $\la$ is the largest $l$ such that $\la_l \ne 0$. We define the \emph{size} $|\lambda|:= \sum_m \lambda_m$. We call $\la$ a partition of $n$ if  $|\la| = n$.
We use the notation
\begin{align*}
\sum_{\la \vdash n} (..)
:=
\sum_{\la \text{ partition of }n } (..)
\text{ and }
 \sum_{\la}  (..)
:=
\sum_{\la \text{ partition} } (..).
\end{align*}
\end{definition}
Let $\sigma\in S_n$ be arbitrary. We can write $\sigma = \sigma_1\cdots \sigma_l$ with $\sigma_i$ disjoint cycles of length $\la_i$.
Since disjoint cycles commute, we can assume that $\la_1\geq\la_2\geq\cdots \geq\la_l$.
We call the partition $\la=(\la_1,\la_2,\cdots,\la_l)$ the \emph{cycle-type} of $\sigma$. We write $\mathcal{C}_\la$ for the set of all $\sigma \in S_n$ with cycle type $\la$.
One  can show that two elements $\sigma,\tau\in S_n$ are conjugate if and only if
$\sigma$ and $\tau$ have the same cycle-type and that the $C_\la$ are the conjugacy classes of $S_n$ (see e.g. \cite{macdonald} for more details).

\begin{definition}
\label{def_Cm_and_K0n}
Let $\sigma\in S_n$ be given with cycle-type $\la$.
The cycle numbers $C_m$ and the total number of cycles $K_{0n}$ are defined as
\begin{align}
  C_m = C_m^{(n)}(\sigma) := \#\set{i ;\la_i = m} \text{ and }   K_{0n}:= \sum_{m=1}^n C_m^{(n)}.
\end{align}
\end{definition}
The functions $C_m$ and $K_{0n}$ depend only on the cycle type and are thus class functions (i.e. they are constant on conjugacy classes).

All expectations in this paper have the form $\frac{1}{n!} \sum_{\sigma\in S_n} u(\sigma)$ for a certain class function $u$.
Since $u$ is constant on conjugacy classes, it is more natural to sum over all conjugacy classes.
We thus need to know the size of each conjugacy class.

\begin{lemma}
\label{lem_size_of_conj_classes}
We have
\begin{align}
|\mathcal{C}_\la|=\frac{|S_n|}{z_\la}
\text{ with }
z_\la:=\prod_{m=1}^{n} m^{c_m}c_m!
\text{ and }c_m = c_m(\la) = \# \set{\la_i;\la_i = m},
\end{align}
and
\begin{align}
  \frac{1}{n!} \sum_{\sigma\in S_n} u(\sigma)
  =
  \sla u(\mathcal{C}_\la)
\end{align}
for a class function $u: S_n \to \C$.
\end{lemma}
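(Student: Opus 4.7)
The plan is to prove the two assertions in sequence via the orbit--stabilizer theorem applied to the conjugation action of $S_n$ on itself. For the first formula, I will fix a representative $\sigma\in\mathcal{C}_\la$ and identify the conjugacy class as the orbit of $\sigma$ under conjugation. By orbit--stabilizer, $|\mathcal{C}_\la| = |S_n|/|Z(\sigma)|$, where $Z(\sigma)$ is the centralizer of $\sigma$ in $S_n$. So the whole task reduces to computing $|Z(\sigma)|$ and showing it equals $z_\la = \prod_m m^{c_m} c_m!$.

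To compute the centralizer, I will write $\sigma$ as a product of disjoint cycles grouped by length: for each $m\geq 1$, there are exactly $c_m=c_m(\la)$ cycles of length $m$. An element $\tau\in S_n$ commutes with $\sigma$ if and only if conjugation by $\tau$ permutes the cycles of $\sigma$ while preserving their cyclic structure. Thus $\tau$ is determined by two independent pieces of data for each $m$: a permutation of the $c_m$ cycles of length $m$ among themselves ($c_m!$ choices) and, for each of these $c_m$ cycles, a cyclic rotation specifying how $\tau$ maps one cycle onto another ($m$ choices per cycle, hence $m^{c_m}$ total). Multiplying over $m$ gives $|Z(\sigma)| = \prod_{m=1}^n m^{c_m} c_m! = z_\la$, which yields the first identity.

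The second identity is then immediate by grouping terms in the sum according to conjugacy class. Since $u$ is a class function, $u(\sigma)$ depends only on the cycle type $\la$ of $\sigma$, so
\begin{align*}
\frac{1}{n!}\sum_{\sigma\in S_n} u(\sigma)
= \frac{1}{n!}\sum_{\la\vdash n} |\mathcal{C}_\la|\, u(\mathcal{C}_\la)
= \sum_{\la\vdash n} \frac{1}{z_\la}\, u(\mathcal{C}_\la),
\end{align*}
where in the last step I apply the first part of the lemma. The statement as written uses the unrestricted sum $\sla$, but this causes no ambiguity since $\mathcal{C}_\la$ is empty (and can be assigned value $0$) for $\la$ not a partition of $n$, so the two sums agree.

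No step poses a real obstacle: the only place where care is needed is the centralizer count, and specifically the bookkeeping showing that the $m$ cyclic rotations in each cycle of length $m$ are independent of the $c_m!$ permutations among cycles of the same length. I would make this rigorous by exhibiting an explicit bijection between $Z(\sigma)$ and $\prod_m (S_{c_m} \ltimes (\Z/m\Z)^{c_m})$ (the wreath product $\Z/m\Z \wr S_{c_m}$), though for the lemma it is enough simply to count.
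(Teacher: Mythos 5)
Your proof is correct. The paper does not actually argue the first identity at all; it simply cites Macdonald and Bump (where the standard orbit--stabilizer/centralizer computation you carry out is the proof given), and notes that the second identity is immediate from the first, exactly as you conclude. Your centralizer count $|Z(\sigma)|=\prod_m m^{c_m}c_m!$ via the wreath-product structure is the standard argument behind the citation, and your remark reconciling the unrestricted sum $\sla$ with the sum over partitions of $n$ sensibly handles what is in effect a small notational slip in the statement; nothing in your write-up diverges in substance from what the references supply.
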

\begin{proof}
 The first part can be found in \cite{macdonald} or in \cite[chapter 39]{bump}. The second part follows immediately from the first part.
\end{proof}

Given a sequence $(g_{n})_{n\geq1}$ of numbers, one can encode the information about this sequence into a formal power series called the generating series.
\begin{definition}
\label{def_gneranting_function}
Let $\bigl(g_n \bigr)_{n\in\N}$ be a sequence of complex numbers. We then define
the (ordinary) generating function of $\bigl(g_n \bigr)_{n\in\N}$ as the formal power series
    \begin{align}
      G(t) = G(g_n,t) = \sum_{n=0}^\infty g_n t^n.
    \end{align}
%
\end{definition}
\begin{definition}
  Let $G(t) =\sum_{n=0}^\infty g_n t^n$ be a formal power series. We then define
$\nth{G} := g_n$, i.e. the coefficient of $t^n$ in $G(t)$
\end{definition}
The reason why generating functions are useful is that it is often possible to write down a generating function
without knowing $g_n$ explicitly. Then one can try to use tools from analysis to extract information about $g_{n}$, for large $n$, from the generating function. It should be noted that
there are several variants in the definition of generating series and we shall use several of them and still call all of them generating series without risk of confusion.
We will also later replace  (see Section~\ref{sec_central_limit_tot_number_cycles}) $g_n$ by holomorphic functions $g_n(w)$.
Such generating functions are then called bivariate generating functions. Again for simplicity we shall still call them generating functions.

We now introduce two generating functions (in the broad sense) which will play a crucial role in our study of random weighted permutations under the measure   $\PT{(.)}$. For $\Theta=(\theta_{n})_{n\geq1}$, we set
\begin{align}
  g_\Theta(t)
  &:=
  \sum_{k=1}^\infty \frac{\theta_k}{k} t^k
  \text{ and }
  G_\Theta(t)
  :=
  \exp\left( \sum_{k=1}^\infty \frac{\theta_k}{k} t^k \right)
  \label{eq_def_gen_hn_with_theta}
\end{align}
For now, $g_\Theta(t)$ and $G_\Theta(t)$ are just formal power series.
We will see in Section~\ref{sec_central_limit_number_cycles} and in Section~\ref{sec_central_limit_tot_number_cycles} that the asymptotic behaviour of $C^{(n)}_m$ and $K_{0n}$ depend on the analytic properties of $g_\Theta(t)$, essentially because of the remarkable well known  identity (that we will quickly derive below)
%

%
%
\begin{align}
  G_\Theta(t)
=
\sum_{n=0}^\infty h_n t^n.
\label{eq_def_gen_Gt_with_hn}
\end{align}

One of the main tools in this paper to compute generating series is the following lemma (or cycle index theorem) of which we shall prove a more general form in Section \ref{sec_more_general_measures} to deal with the more general measure $\PF{.}$.

\begin{lemma}
\label{lem_cycle_index_theorem}
Let $(a_m)_{m\in\N}$ be a sequence of complex numbers. Then
\begin{align}
\label{eq_symm fkt}
\sla \left(\prod_{m=1}^{l(\la)} a_{\la_{m}}\right) t^{|\la|}
=
\sla \left(\prod_{m=1}^{\infty} (a_m t^m)^{C_m} \right)
=
\exp\left(\sum_{m=1}^{\infty}\frac{1}{m} a_m t^m\right)
\end{align}
with the same $z_\la$ as in Lemma~\ref{lem_size_of_conj_classes}.\\
If one of the sums in \eqref{eq_symm fkt} is absolutely convergent then so are the others.
\end{lemma}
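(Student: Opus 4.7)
The plan is to rewrite both sides using the bijection between partitions $\la$ and sequences $(c_m)_{m\geq 1}$ of non-negative integers with finitely many non-zero terms, where $c_m = \#\{i : \la_i = m\}$. Under this bijection, $|\la| = \sum_m m c_m$ and $\prod_{m=1}^{l(\la)} a_{\la_m} = \prod_{m=1}^{\infty} a_m^{c_m}$, and thus
\begin{align*}
\left(\prod_{m=1}^{l(\la)} a_{\la_m}\right) t^{|\la|}
= \prod_{m=1}^{\infty} a_m^{c_m} t^{m c_m}
= \prod_{m=1}^{\infty} (a_m t^m)^{c_m},
\end{align*}
which is precisely the first equality in the lemma (here we identify $c_m = C_m(\la)$ in the sum on the right).

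For the second equality, I would substitute the explicit form $z_\la = \prod_m m^{c_m} c_m!$ and reorganise the sum over partitions as an unrestricted sum over sequences $(c_m)_{m\geq 1}$ of non-negative integers with finite support:
\begin{align*}
\sum_{\la} \frac{1}{z_\la} \prod_{m=1}^{\infty} (a_m t^m)^{c_m}
= \sum_{(c_m)} \prod_{m=1}^{\infty} \frac{1}{c_m!}\left(\frac{a_m t^m}{m}\right)^{c_m}.
\end{align*}
At the formal power series level this factorises as an infinite product, and each factor is the exponential series, yielding
\begin{align*}
\prod_{m=1}^{\infty} \sum_{c_m = 0}^{\infty} \frac{1}{c_m!}\left(\frac{a_m t^m}{m}\right)^{c_m}
= \prod_{m=1}^{\infty} \exp\!\left(\frac{a_m t^m}{m}\right)
= \exp\!\left(\sum_{m=1}^{\infty} \frac{a_m t^m}{m}\right).
\end{align*}
The identity is justified formally coefficient-by-coefficient in $t$: for each fixed degree $n$, only finitely many sequences $(c_m)$ with $\sum_m m c_m = n$ contribute, so the factorisation and the exchange of sum and product reduce to a finite combinatorial identity with no convergence issue.

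The main (minor) obstacle is the final statement about the three sums having the same convergence status. Here I would argue that the terms in \eqref{eq_symm fkt} are, up to reordering, all positive multiples of the monomials $\prod_m (a_m t^m/m)^{c_m}/c_m!$, so each of the three expressions in the lemma is, in absolute value, dominated by (and dominates) the corresponding expression with $a_m$ replaced by $|a_m|$ and $t$ by $|t|$. By the same formal factorisation applied to absolute values, absolute convergence of any one of the three sums is equivalent to $\sum_m |a_m| |t|^m / m < \infty$, hence to absolute convergence of the others; once this is granted, Fubini's theorem legitimises the rearrangements and the identity holds as an equality of complex numbers.
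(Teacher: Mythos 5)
Your argument is correct and follows essentially the same route as the paper: the first equality from the definition of $C_m$, the second by direct verification using $z_\la=\prod_m m^{c_m}c_m!$ and the expansion of the exponential (which the paper merely cites or leaves as a direct check), and the convergence statement via comparison with the nonnegative case and Fubini, which is the paper's dominated-convergence step spelled out.
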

\begin{proof}
The first equality follows immediately from the definition of $C_m$.
The proof of the second equality in \eqref{eq_symm fkt} can be found in \cite{macdonald} or can be directly verified using the definitions of $z_\la$ and the exponential function.
The last statement follows from dominated convergence.
\end{proof}
We now use this lemma to show \eqref{eq_def_gen_Gt_with_hn}, i.e. $G_\Theta(t) = \sum_{n=0}^\infty h_n t^n$.
We know from \eqref{eq_PTheta_with_partition} that
\begin{align}
h_n
=
\frac{1}{n!} \sum_{\sigma\in S_n} \prod_{m=1}^{n} \theta_{m}^{C_m}
=
\frac{1}{n!} \sum_{\la \vdash n} \frac{n!}{z_\la} \prod_{m=1}^{l(\la)} \theta_{\la_m}
=
\sum_{\la \vdash n} \frac{1}{z_\la} \prod_{m=1}^{l(\la)} \theta_{\la_m}.
\end{align}
It now follows from lemma~\ref{lem_cycle_index_theorem} that
\begin{align}
  \sum_{n=1}^\infty h_n t^n
  &=
  \sum_{\la} \frac{1}{z_\la}  t^{|\la|} \prod_{m=1}^{l(\la)} \theta_{\la_m}
  =
  \exp \left( \sum_{m=1}^\infty \frac{\theta_m}{m} t^m \right)
  =
  G_\Theta(t)
\end{align}
This proves \eqref{eq_def_gen_Gt_with_hn}.\\
\subsection{Singularity analysis}

If a generating function $g(t)$ is given then a natural question is:
what is $\nth{g}$ and what is the asymptotic behaviour of $\nth{g}$. If $g(t)$ is holomorphic near $0$ then one can use
Cauchy's integral formula to do this.
%
Unfortunately it is often difficult to compute the integral explicitly,
but there exist several other results  which allow to achieve this task.
One such theorem is due to Hwang \cite{hwang_thesis}, and we prepare it with some preliminary definition and notation.

\begin{definition}
\label{def_delta_0}
Let $0< r < R$ and $0 < \phi <\frac{\pi}{2}$ be given. We then define
\begin{align}
\Delta_0 = \Delta_0(r,R,\phi) = \set{ z\in \C ; |z|<R, z \neq r ,|\arg(z-r)|>\phi}
\label{eq_def_delta_0}
\end{align}
\begin{figure}[ht!]
 \centering
 \includegraphics[width=.4\textwidth]{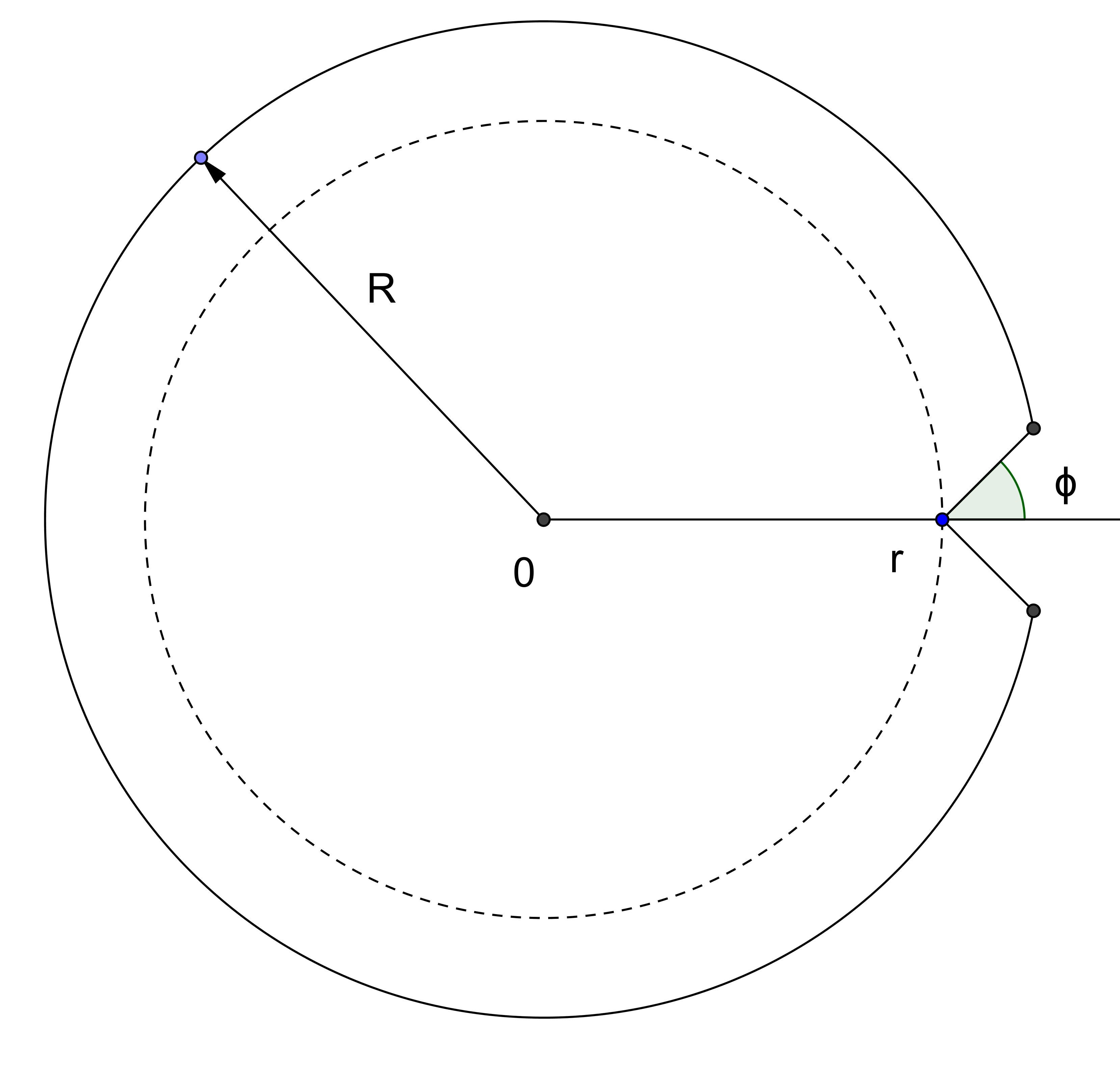}
 \caption{Illustration of $\Delta_0$}
 \label{fig_delta_0}
\end{figure}
\end{definition}

\begin{definition}
 \label{def_function_class_F_alpha_r}
Let $g(t)$ and $\theta \geq 0, r>0$ be given. We then call $g(t)$ of class $\mathcal{F}(r,\theta)$ if
\begin{enumerate}
 \item there exists $R>r$ and $0 < \phi <\frac{\pi}{2}$ such that $g(t)$ is holomorphic in $\Delta_0(r,R,\phi)$,
 \item there exists a constant $K$ such that
   \begin{align}
   g(t) = \theta \log\left( \frac{1}{1-t/r} \right) + K + O \left( t-r \right)   \text{ for } t\to r.
   \label{eq_class_F_r_alpha_near_r}
   \end{align}
\end{enumerate}
We shall use the shorter notation $g(t) \in \mathcal{F}(r,\theta)$.
\end{definition}
We emphasize only the dependence on $\theta$ and $r$ since the other constants do not appear in the main results.
We now state an important theorem due to Hwang:
\begin{theorem}[Hwang \cite{hwang_thesis}]
\label{thm_hwang}
Let $F(t,w) = e^{w g(t)} S(t,w)$ be given. Suppose that
\begin{enumerate}
 \item Let $g(t)$ is of class $\mathcal{F}(r,\theta)$,
 \item \label{item_S_holo} $S(t,w)$ is holomorphic in $(t,w)$ for $|t| \leq r$ and  $|w| \leq \hat{r}$ for some $\hat{r}>0$, i.e. $S(t,w)$ is holomorphic in $(t,w)$ in a domain containing the set $\set{ (t,w)\in \C^2;  |t| \leq r, |w| \leq \hat{r} }$.
\end{enumerate}
Then
\begin{align}
  \nth{F(t,w)}
=
\frac{e^{Kw} n^{w\theta-1} }{r^n} \left(\frac{S(r,w)}{\Gamma(\theta w)} + O\left(  \frac{1 }{n}\right)\right)
\label{eq_thm_hwang}
\end{align}
uniformly for $|w| \leq \hat{r} $ and with the same $K$ as in \eqref{eq_class_F_r_alpha_near_r}.
\end{theorem}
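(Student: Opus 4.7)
The strategy is to apply singularity analysis to the product representation of $F(t,w)$: isolate the algebraic--logarithmic singularity of $e^{wg(t)}$ at $t=r$, factor it out, and then use Cauchy's integral formula on a Hankel-type contour to transfer coefficient asymptotics. This is essentially Hwang's original argument based on the Flajolet--Odlyzko transfer theorem.

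First, I would rewrite the generating function so that its dominant singularity is explicit. Setting
\[
\tilde g(t) := g(t) - \theta\log\!\left(\frac{1}{1-t/r}\right) - K,
\]
the hypothesis $g \in \mathcal{F}(r,\theta)$ implies that $\tilde g$ is holomorphic in $\Delta_0(r,R,\phi)$ and satisfies $\tilde g(t) = O(t-r)$ as $t \to r$ inside this domain. Hence
\[
F(t,w) = e^{Kw}\,(1-t/r)^{-\theta w}\,T(t,w), \qquad T(t,w) := e^{w\tilde g(t)}\,S(t,w).
\]
After possibly shrinking $R$ slightly, $T$ is jointly holomorphic on $\Delta_0(r,R',\phi) \times \set{|w| \leq \hat r}$, with $T(r,w) = S(r,w)$ and $T(t,w) - S(r,w) = O(t-r)$ uniformly in $|w| \leq \hat r$ by Cauchy estimates.

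Next I would split $F = e^{Kw}(1-t/r)^{-\theta w}S(r,w) + e^{Kw}(1-t/r)^{-\theta w}(T(t,w)-S(r,w))$ and apply the transfer theorem to each piece. For the first summand one has
\[
[t^n](1-t/r)^{-\theta w} = \frac{n^{\theta w - 1}}{r^n\,\Gamma(\theta w)}\bigl(1 + O(1/n)\bigr)
\]
uniformly on $|w| \leq \hat r$, which produces the main term $e^{Kw} n^{\theta w-1} S(r,w)/(r^n \Gamma(\theta w))$. For the second summand, the extra factor $T(t,w) - S(r,w) = O(t-r)$ effectively replaces the singular exponent $-\theta w$ by $-\theta w + 1$, whose $n$-th coefficient is $O(n^{\theta w -2}/r^n)$; relative to the main term this is a factor of $O(1/n)$. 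Combining both contributions yields \eqref{eq_thm_hwang}.

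To make the transfer rigorous I would evaluate the Cauchy integral on a Hankel contour contained in $\Delta_0$: a circular arc of radius $R'' > r$ joined to a small loop of radius $c/n$ around $t = r$ by two rays meeting the real axis at angle $\phi$. The large-arc contribution is geometrically smaller than $r^{-n}$, and on the small loop the substitution $t = r(1 + s/n)$ converts the integral into the Hankel representation of $1/\Gamma(\theta w)$ plus a controlled remainder. The main obstacle, and what needs real care, is the uniformity of all the estimates in $w$: every bound on the Hankel contour, and every Cauchy bound on $T(t,w) - S(r,w)$, must be controlled uniformly for $w$ in the compact set $\set{|w|\leq \hat r}$, which is possible because $\theta w$ then lies in a fixed bounded region. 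The apparent degeneracies at $\theta w \in \set{0,-1,-2,\dots}$ are harmless: at these points $1/\Gamma(\theta w) = 0$ while $(1-t/r)^{-\theta w}$ is a polynomial, so both sides simultaneously collapse into the $O(1/n)$ error.
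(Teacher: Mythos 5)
Your proposal takes essentially the same route as the paper, whose proof of this theorem consists of invoking Hwang's Hankel-contour singularity analysis (it only states the idea and refers to Chapter~5 of Hwang's thesis for details): you peel off the dominant factor $(1-t/r)^{-\theta w}$, transfer coefficients via a Hankel contour inside $\Delta_0$, and track uniformity in $w$ on the compact set $|w|\leq \hat{r}$, exactly as in Hwang's argument. The sketch is sound, so nothing further is needed.
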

\begin{proof}
The idea of the proof is to take a suitable Hankel contour and to estimate the integral over each piece.
The details can be found in \cite[chapter 5]{hwang_thesis}.
\end{proof}
\textbf{Remark:}

We use most times $w = 1$ and $S$ independent of $w$. One can also compute lower order error terms if one has more terms in the expansion of $g(t)$ near $r$. \\

A natural question at this point is: how can one prove that $g(t)$ is of class $\mathcal{F}(r,\theta)$?
It is most times easy to compute the radius of convergence of $g(t)$, but it is not obvious how to show that $g(t)$ is holomorphic in some $\Delta_0$. A way to achieve this is through Lindel\"of's integral represention:

\begin{theorem}[Lindel\"of's integral represention]
\label{thm_lindelof_integral_representation}
Let $\phi(z)$ be a holomorphic function for $\Re(z)>0$, satisfying
\begin{align}
 |\phi(z)| < C e^{A|z|} \text{ for } |z| \to \infty \text{ and }\Re(z)\geq \frac{1}{2} \text{ with some } A\in ]0,\pi[, C > 0 .
\end{align}
Let  $g(t):= \sum_{k=1}^\infty \phi(k) (-t)^k$. The radius of convergence of $g(t)$ is $e^{-A}$ and
\begin{align}
g(t)
=
\frac{-1}{2\pi i}
\int_{1/2 - i\infty}^{1/2 + i\infty}
\phi(z) t^z \frac{\pi}{\sin(\pi z)} \ dz.
\label{eq_lindelof_integral_representation}
\end{align}
Furthermore $g(t)$ can be holomorphically continued to the sector $-(\pi-A) < \arg(t) < (\pi-A)$.
\end{theorem}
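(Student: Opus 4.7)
The plan is a standard Mellin--Barnes contour argument. The integrand $z \mapsto \phi(z)\,t^z\,\pi/\sin(\pi z)$ is meromorphic on $\mathrm{Re}(z) > 0$ with simple poles at each positive integer $k$ and residue $\phi(k)(-t)^k$ there (since $\pi/\sin(\pi z)$ has residue $(-1)^k$ at $z=k$). Shifting the contour in \eqref{eq_lindelof_integral_representation} from $\mathrm{Re}(z) = 1/2$ toward $+\infty$ and collecting residues should reproduce the series $g(t)$ for small positive $t$; an analytic continuation argument then extends the identity to the sector $|\arg t| < \pi - A$. The radius of convergence estimate comes for free: the bound $|\phi(k)| \leq C e^{Ak}$ gives $\limsup_k |\phi(k)|^{1/k} \leq e^A$, so the radius is at least $e^{-A}$.

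First I would establish absolute convergence of the integral. Using the standard bound $|\sin(\pi z)|^{-1} \leq C' e^{-\pi |\mathrm{Im}(z)|}$ away from the integers, the hypothesis $|\phi(z)| \leq C e^{A|z|}$, and the identity $|t^z| = |t|^{\mathrm{Re}(z)} e^{-(\arg t)\mathrm{Im}(z)}$, the integrand on the line $\mathrm{Re}(z) = 1/2$ is dominated by $C\,|t|^{1/2} e^{(A-\pi)|\mathrm{Im}(z)| - (\arg t)\mathrm{Im}(z)}$. This is integrable in $\mathrm{Im}(z)$ precisely when $|\arg t| < \pi - A$, so the right-hand side of \eqref{eq_lindelof_integral_representation} already defines a holomorphic function of $t$ on this sector for every $t \neq 0$; holomorphy in $t$ follows from differentiation under the integral sign, justified by the uniform exponential bounds on compact subsectors.

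Second, for $t \in (0, e^{-A})$ I would apply the residue theorem on the rectangle with vertices $1/2 \pm iT$ and $N+1/2 \pm iT$, then let $T \to \infty$ and $N \to \infty$. On the horizontal sides $\mathrm{Im}(z) = \pm T$ the integrand is bounded by $C\,|t|^{1/2} e^{(A-\pi)T}$ times the length $N$ of the segment, which vanishes as $T \to \infty$ for every fixed $N$. On the right vertical side $\mathrm{Re}(z) = N+1/2$ the identity $|\sin \pi(N+1/2 + iy)| = \cosh(\pi y)$ yields an integrand dominated by $C''(|t|e^A)^{N+1/2}$, which tends to $0$ as $N \to \infty$ provided $|t| < e^{-A}$. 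Collecting the residues at $z = 1, \ldots, N$ and passing to the limit produces the series $g(t) = \sum_{k \geq 1} \phi(k)(-t)^k$. Since the integral is holomorphic on the full sector $|\arg t|<\pi-A$ and agrees with $g(t)$ on $(0, e^{-A})$, the identity theorem delivers both \eqref{eq_lindelof_integral_representation} and the claimed holomorphic continuation.

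The main technical obstacle, and the reason the hypothesis $A < \pi$ is essential, is the competition between the growth $e^{A|z|}$ of $\phi$ and the decay $e^{-\pi|\mathrm{Im}(z)|}$ of $1/\sin(\pi z)$: the residual gap of size $\pi - A$ is exactly what governs both the convergence of the contour integral at $\mathrm{Im}(z) = \pm \infty$ and the half-angle of the sector of continuation, and the bounds on the horizontal and far-right segments of the rectangular contour must be carried out with this in mind to ensure the shift is legitimate.
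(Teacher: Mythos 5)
Your argument is correct in substance, but it is worth noting that the paper does not prove this theorem at all: it simply cites \cite[Theorem~2]{MR2578896}. What you have written is essentially a self-contained reconstruction of the classical Lindel\"of/Mellin--Barnes argument that the cited reference contains: absolute convergence of the line integral on $\mathrm{Re}(z)=1/2$ from the competition between $e^{A|z|}$ and the $e^{-\pi|\mathrm{Im}(z)|}$ decay of $1/\sin(\pi z)$, a rectangular contour shift for $t\in(0,e^{-A})$ picking up the residues $\phi(k)(-t)^k$ at the positive integers (with the correct sign once the orientation of the left edge is accounted for, matching the $-1/(2\pi i)$ in \eqref{eq_lindelof_integral_representation}), and the identity theorem to glue the sum inside the disc to the integral on the sector $|\arg t|<\pi-A$. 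The trade-off is the usual one: the citation keeps the paper short, while your version makes explicit exactly where the hypothesis $A\in\,]0,\pi[$ enters (integrability at $\mathrm{Im}(z)=\pm\infty$, decay on the far-right edge, and the aperture of the sector), which is the content a reader actually needs.

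Two minor points. First, on the horizontal edges your stated bound $C|t|^{1/2}e^{(A-\pi)T}$ drops the factor $e^{A(N+1/2)}$ coming from $|\phi(z)|\le Ce^{A|z|}$ along the segment; since $N$ is held fixed while $T\to\infty$ this is only a constant and the conclusion stands, but the bound should be written with it. Second, you prove (correctly) only that the radius of convergence is \emph{at least} $e^{-A}$; the theorem's claim that it \emph{equals} $e^{-A}$ cannot follow from the stated hypotheses (take $\phi(z)=e^{-z}$, which satisfies the bound for every $A$), so your silent weakening is in fact the right reading, and it is all that is used later in the paper.
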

\begin{proof}
See \cite[Theorem~2]{MR2578896}.
\end{proof}
In many situations Theorem~\ref{thm_lindelof_integral_representation} allows us  to prove holomorphicity in a domain $\Delta_0$, but does not give any information about the behaviour of $g(t)$ near the singularity.
One way to compute the asymptotic behaviour of $g(t)$ near the singularity is to use the Mellin transform.
We do not introduce here the Mellin transform since this would take  us to far away from the topic of this paper.
We would rather refer to \cite{MR1337752} for an introduction and to \cite[Section~3]{MR1678788} for an application to the polylogarithm.\\

Theorem~\ref{thm_lindelof_integral_representation} is not always so easy to apply and the computation of the asymptotic behaviour near the singularity is often very difficult.
An alternative approach is to combine singularity analysis with more elementary methods.
The idea is to write $F = F_1 F_2$ in a way that we can apply singularity analysis to $F_1$ and can estimate the growth rate of $\nth{F_2}$. One  can then compute the coefficient $\nth{F}$ directly and  conclude with elementary analysis. This method is called the \textit{convolution method}.
We now introduce a class of functions which is suitable for this approach.
\begin{definition}
 \label{def_function_class_eF_alpha_r_gamma}
Let $g(t)$ and $\theta\geq 0, r>0, 0<\gamma \leq 1$ be given. We then call $g(t)$ of class $e\mathcal{F}(r,\theta,\gamma)$ if there exists an analytic function $g_0$ in the disc $\{|z|<r\}$ such that
\begin{align}
   g(t) = \theta \log\left( \frac{1}{1-t/r} \right) + g_0(t)   \text{ for } t\to r \text{ with }\nth{g_0} = O(r^{-n} n^{-1-\gamma}).
   \label{eq_class_eF_alpha_r_gamma_near_r}
\end{align}
\end{definition}
We can now state a singularity analysis theorem corresponding to the class $e\mathcal{F}(r,\theta,\gamma)$:
\begin{theorem}[Hwang \cite{MR1724562}]
\label{thm_hwang_2}
Let $F(t,w) = e^{w g(t)} S(t,w)$ be given such that
\begin{enumerate}
 \item  $g(t)$ is of class $e\mathcal{F}(r,\theta,\gamma)$,
 \item   $S(t,w)$ is holomorphic in $(t,w)$ for $|t| \leq r$ and  $|w| \leq \hat{r}$ for some $\hat{r}>0$.
\end{enumerate}
We then have
\begin{align}
\nth{F(t,w)}
=
\frac{e^{w K} n^{w\theta-1} }{r^n} \frac{S(r,w)}{\Gamma(\theta w)} + R_n(w)
\end{align}
with $K = g_0(r)$ and
\begin{align}
  R_n(w)
  =
  \left\{
    \begin{array}{ll}
      O \left(\frac{n^{\theta\Re(w) - 1- \gamma} \log(n)}{r^n} \right), & \hbox{ if }\Re(w) \geq 0 \\
      O \left(\frac{ n^{-1 - \gamma} }{r^n} \right), &  \hbox{ if } \Re(w) < 0.
    \end{array}
  \right.
\end{align}
uniformly for bounded $w$.
\end{theorem}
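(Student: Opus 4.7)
The plan is to use the convolution method suggested just before the statement. Factor
\begin{align*}
F(t,w) = e^{w g(t)} S(t,w) = (1-t/r)^{-w\theta} \cdot H(t,w), \qquad H(t,w) := e^{w g_0(t)} S(t,w).
\end{align*}
Since $g_0$ is holomorphic on $|z|<r$ and $S(\cdot,w)$ is holomorphic on $|t|\leq r$, the function $H(\cdot,w)$ is holomorphic on the open disc $|t|<r$, and the hypothesis $[t^n]g_0 = O(r^{-n}n^{-1-\gamma})$ transfers, via a standard convolution calculation using $e^{wg_0} = \sum_{j\geq 0}(wg_0)^j/j!$ together with the Hadamard-product/Cauchy bound for the coefficients of $S$, to the estimate
\begin{align*}
[t^n]H(t,w) - [t^n]\bigl(H(r,w)\cdot \mathbf{1}\bigr) \cdot \delta_{n,0} \;=\; O\!\left(r^{-n} n^{-1-\gamma}\right) \quad \text{uniformly in }|w|\leq \hat r,
\end{align*}
and in particular $H(r,w)=e^{wK}S(r,w)$ is finite. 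First I would prove this coefficient bound carefully: it is the technical input that replaces the Hankel-contour analysis used for Theorem~\ref{thm_hwang}.

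Next I would use the classical binomial asymptotics
\begin{align*}
[t^m](1-t/r)^{-w\theta} \;=\; \frac{r^{-m}}{\Gamma(w\theta)}\, m^{w\theta-1}\bigl(1 + O(1/m)\bigr), \qquad m\to\infty,
\end{align*}
uniformly for $|w|\leq \hat r$ (with the convention $1/\Gamma(w\theta)=0$ at the poles, where the coefficients vanish anyway for large $m$). Applying the Cauchy product
\begin{align*}
[t^n]F(t,w) \;=\; \sum_{k=0}^n [t^k]H(t,w)\cdot [t^{n-k}](1-t/r)^{-w\theta},
\end{align*}
I would split the sum at $k=\lfloor n/2\rfloor$. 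On the low-$k$ range I would substitute the binomial asymptotic and pull out the slowly-varying factor $(n-k)^{w\theta-1} = n^{w\theta-1}(1+O(k/n))$; summing the series $\sum_{k\geq 0}[t^k]H(t,w) = H(r,w)$ (which converges absolutely by the previous bullet) yields the main term $e^{wK}S(r,w)\, n^{w\theta-1}r^{-n}/\Gamma(w\theta)$. The tail $\sum_{k\leq n/2}$ of the correction $k/n$ contributes $O(n^{\theta\Re(w)-2}r^{-n}\log n)$ when $\Re(w)\geq 0$, which is absorbed in the claimed remainder.

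For the high-$k$ range I would bound the convolution by $\bigl(\max_{k> n/2}|[t^k]H(t,w)|\bigr)\cdot \sum_{m<n/2}|[t^m](1-t/r)^{-w\theta}|$. The first factor is $O(r^{-n} n^{-1-\gamma})$ by the coefficient bound on $H$; the second factor is $O(n^{\theta\Re(w)})$ when $\Re(w)\geq 0$ and $O(1)$ when $\Re(w)<0$ (the series even converges absolutely in the latter case). Combining, the high-$k$ contribution is $O\!\bigl(r^{-n} n^{\theta\Re(w)-1-\gamma}\bigr)$ for $\Re(w)\geq 0$ and $O\!\bigl(r^{-n} n^{-1-\gamma}\bigr)$ for $\Re(w)<0$. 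Collecting, together with the log factor coming from the pole case $\theta w\in\{0,-1,-2,\dots\}$ when $\Re(w)\geq 0$, gives exactly the dichotomy stated for $R_n(w)$.

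The main obstacle will be step one: turning the pointwise bound $[t^n]g_0=O(r^{-n}n^{-1-\gamma})$ into a bound on $[t^n]H(t,w) = [t^n]e^{w g_0(t)}S(t,w)$ that is both uniform in $|w|\leq \hat r$ and compatible with the summation of the convolution tail. One has to control convolutions of the form $(r^{-n}n^{-1-\gamma})\ast (r^{-n}n^{-1-\gamma})$, which behave like $r^{-n}n^{-1-\gamma}$ only because $\gamma\leq 1$, and the factor $S(t,w)$, whose coefficients decay only geometrically slower than $r^{-n}$, must be handled by slightly shrinking $r$ to $r(1-1/n)$ and using a Cauchy estimate. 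Once this convolution stability lemma is in place, the rest of the argument is a routine splitting of the Cauchy product and a case distinction on the sign of $\Re(w)$.
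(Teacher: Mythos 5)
The paper does not actually prove this statement---it is quoted from Hwang with the proof deferred to \cite{MR1724562}---and your proposal is precisely the convolution-method argument that the paper sketches just before the theorem and that the cited reference carries out (factor off $(1-t/r)^{-w\theta}$, establish the coefficient stability of $O(r^{-n}n^{-1-\gamma})$ under the exponential and the product with $S$, then split the Cauchy product), so it is essentially the same approach and in substance correct. Two small repairs: $S(t,w)$ is holomorphic on a neighbourhood of the closed disc $|t|\le r$, so its coefficients are $O(\rho^{-n})$ for some $\rho>r$, i.e.\ they decay geometrically \emph{faster} than $r^{-n}$ and no shrinking of the radius is needed; and in the high-$k$ range you should keep the geometric factors paired inside the sum, $|h_k|\,|b_{n-k}|\le C\,r^{-n}k^{-1-\gamma}(n-k+1)^{\theta\Re(w)-1}$ (or rescale to $r=1$ at the outset), rather than bounding by a maximum times a sum, since the orders you assign to those two factors are only simultaneously valid when $r=1$ (likewise, the $O(k/n)$-correction contributes $O(r^{-n}n^{\theta\Re(w)-1-\gamma})$ for $\gamma<1$, not $O(r^{-n}n^{\theta\Re(w)-2}\log n)$, which is still within the claimed remainder).
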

\begin{proof}
See \cite{MR1724562}.
\end{proof}
We can now compute the asymptotic behaviour of $h_n$.
\begin{lemma}
\label{lem_asymptotic_hn}
Let $g_\Theta(t)$ be of class $\mathcal{F}(r,\theta)$ or of class $e\mathcal{F}(r,\theta,\gamma)$. We then have
\begin{align}
\frac{1}{h_n}
\sim
r^n \frac{\Gamma(\theta)}{e^{K} n^{\theta-1}},\;n\to\infty.
\label{eq_asymptotic_hn}
\end{align}
with the same $K$ as in \eqref{eq_class_F_r_alpha_near_r} resp. as in Theorem~\ref{thm_hwang_2}.
\end{lemma}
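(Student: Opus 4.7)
The plan is to apply Hwang's singularity analysis theorems directly to the bivariate generating function $F(t,w) = e^{w g_\Theta(t)} S(t,w)$ with the choices $S(t,w) \equiv 1$ and $w = 1$. The starting point is the identity
\begin{align*}
\sum_{n=0}^\infty h_n t^n = G_\Theta(t) = \exp\bigl(g_\Theta(t)\bigr)
\end{align*}
which was derived at the end of Section~\ref{sec_conj_class_of_Sn} via the cycle index theorem (Lemma~\ref{lem_cycle_index_theorem}). Thus $h_n = [t^n]\, e^{g_\Theta(t)}$, and the problem reduces to extracting the asymptotics of this coefficient.

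First I would treat the case $g_\Theta(t) \in \mathcal{F}(r,\theta)$. Setting $S \equiv 1$ (which is obviously holomorphic in $(t,w)$ on all of $\C^2$, so hypothesis~\eqref{item_S_holo} of Theorem~\ref{thm_hwang} is trivially satisfied) and $w = 1$, Theorem~\ref{thm_hwang} yields
\begin{align*}
h_n = [t^n]\, e^{g_\Theta(t)} = \frac{e^{K} n^{\theta-1}}{r^n}\left(\frac{1}{\Gamma(\theta)} + O\!\left(\frac{1}{n}\right)\right),
\end{align*}
from which \eqref{eq_asymptotic_hn} follows by taking reciprocals, since the error term is of order $1/n = o(1)$.

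For the second case, $g_\Theta(t) \in e\mathcal{F}(r,\theta,\gamma)$, the argument is the same but one invokes Theorem~\ref{thm_hwang_2} instead. Again with $S \equiv 1$ and $w = 1$, one obtains
\begin{align*}
h_n = \frac{e^{K} n^{\theta-1}}{r^n}\frac{1}{\Gamma(\theta)} + O\!\left(\frac{n^{\theta - 1 - \gamma}\log(n)}{r^n}\right),
\end{align*}
where $K = g_0(r)$. The remainder is of relative order $n^{-\gamma}\log(n) = o(1)$ compared to the main term, so again \eqref{eq_asymptotic_hn} follows upon inversion.

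There is essentially no real obstacle here: the lemma is a direct specialization of the two singularity analysis theorems quoted above. The only mild subtlety worth mentioning is checking that the hypotheses of Theorem~\ref{thm_hwang} and Theorem~\ref{thm_hwang_2} are met trivially when $S \equiv 1$, and verifying that in both cases the error term is genuinely of smaller order than the main term $n^{\theta-1}/r^n$, which is immediate since $1/n$ and $n^{-\gamma}\log n$ both tend to $0$.
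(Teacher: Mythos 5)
Your proposal is correct and follows essentially the same route as the paper: both apply Theorem~\ref{thm_hwang} (resp.\ Theorem~\ref{thm_hwang_2}) to $G_\Theta(t)=\exp(g_\Theta(t))$ with $w=1$ and $S\equiv 1$, using the identity $\sum_n h_n t^n = G_\Theta(t)$, and then invert the resulting asymptotics for $h_n$. Your added remarks on checking the hypotheses for $S\equiv 1$ and on the relative size of the error terms are just a more explicit writing-out of the same argument.
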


\begin{proof}
We have proven that $\sum_{n=0}^\infty h_n t^n = G_\Theta(t) = \exp( g_\Theta(t))$.
We thus can apply Theorem~\ref{thm_hwang} resp. Theorem~\ref{thm_hwang_2} with $g(t) = g_\Theta(t)$ for $w=1$ and $S(t,w) \equiv 1$. We get
\begin{align}
  h_n
\sim
\frac{e^K n^{\theta-1}}{r^n \Gamma(\theta)},\;n\to\infty
\end{align}
\end{proof}

\textbf{Remark:}

There exist several other versions of the Theorems~\ref{thm_hwang} and Theorems~\ref{thm_hwang_2}.
For example one can replace $\log(1-t/r)$ by other functions (see \cite{Flajolet_generating}) or allow more than one singularity (see \cite[chapter VI.5]{FlSe09}.

\section{Limit theorem for the cycle numbers}
\label{sec_central_limit_number_cycles}

In this section we establish the convergence in distribution of the cycle process to a vector of Poisson random variables.

\begin{theorem}
\label{thm_cycle_numbers_generating}
 Let $b\in\N$ be fixed. We then have as formal power series
\begin{align}
  \sum_{n=0}^\infty
   h_n \ET{\exp \left(i\sum_{m=1}^b s_m C_m^{(n)} \right)} t^n
    &=
\exp\left( \sum_{m=1}^b \frac{\theta_m}{m} (e^{is_m}-1) t^m \right)  G_\Theta(t).
\label{eq_generating_cycles}
\end{align}
If $g_\theta$ is of class $\mathcal{F}(\theta,r)$, then
\begin{align}
 \ET{\exp \left(i\sum_{m=1}^b s_m C_m^{(n)} \right)}
=
\exp\left( \sum_{m=1}^b \frac{\theta_m}{m} (e^{is_m}-1) r^m \right) + O\left(\frac{1}{n} \right).
\label{eq_asymptotic_cycles_F_r_theta}
\end{align}
If $g_\Theta(t)$ is of class $e\mathcal{F}(r,\theta,\gamma)$, then
\begin{align}
\ET{\exp \left(i\sum_{m=1}^b s_m C_m^{(n)} \right)}
=
\exp\left( \sum_{m=1}^b \frac{\theta_m}{m} (e^{is_m}-1) r^m \right) + O\left( \frac{\log(n)}{n^\gamma} \right)
\label{eq_asymptotic_cycles_eF_r_theta_gamma}
\end{align}
\end{theorem}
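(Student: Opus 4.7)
The proof has three parts. I will treat them in order.

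\textbf{Formal power series identity.} The plan is to use Lemma~\ref{lem_size_of_conj_classes} to pass from a sum over $S_n$ to a sum over partitions, and then to apply the cycle index Lemma~\ref{lem_cycle_index_theorem} to get the generating function. Concretely, from the definition of $\PT{.}$,
\begin{align*}
 h_n \ET{\exp\left(i\sum_{m=1}^b s_m C_m^{(n)}\right)}
 &= \frac{1}{n!}\sum_{\sigma\in S_n}\prod_{m=1}^n \theta_m^{C_m(\sigma)}\exp\left(i\sum_{m=1}^b s_m C_m(\sigma)\right)\\
 &= \sum_{\la\vdash n}\frac{1}{z_\la}\prod_{m=1}^\infty a_m^{C_m(\la)},
\end{align*}
where I set $a_m := e^{is_m}\theta_m$ for $1\le m\le b$ and $a_m := \theta_m$ for $m>b$. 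Multiplying by $t^n$, summing, and using \eqref{eq_symm fkt}, the right hand side becomes
\[
\exp\left(\sum_{m=1}^\infty \frac{a_m t^m}{m}\right) = \exp\left(\sum_{m=1}^b \frac{\theta_m(e^{is_m}-1)t^m}{m}\right)\cdot G_\Theta(t),
\]
which is \eqref{eq_generating_cycles}.

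\textbf{Asymptotics via Hwang's theorems.} Write the identity as $F(t,1) := e^{g_\Theta(t)} S(t)$ with
\[
S(t) := \exp\left(\sum_{m=1}^b \frac{\theta_m(e^{is_m}-1)t^m}{m}\right).
\]
Since $S(t)$ is a polynomial-exponential it is entire in $t$ (and constant in $w$, so trivially holomorphic on any polydisc), so hypothesis~(\ref{item_S_holo}) of Theorem~\ref{thm_hwang} (resp.\ Theorem~\ref{thm_hwang_2}) is satisfied. Under the $\mathcal{F}(r,\theta)$ hypothesis, Theorem~\ref{thm_hwang} applied with $w=1$ gives
\[
\nth{F(t,1)} = \frac{e^K n^{\theta-1}}{r^n}\left(\frac{S(r)}{\Gamma(\theta)} + O\!\left(\frac{1}{n}\right)\right).
\]
Dividing by $h_n$ and using the asymptotic \eqref{eq_asymptotic_hn} of Lemma~\ref{lem_asymptotic_hn} (which is obtained from the same theorem with $S\equiv 1$) yields
\[
\ET{\exp\left(i\sum_{m=1}^b s_m C_m^{(n)}\right)} = S(r) + O\!\left(\frac{1}{n}\right),
\]
which is \eqref{eq_asymptotic_cycles_F_r_theta} since $S(r) = \exp\bigl(\sum_{m=1}^b \frac{\theta_m}{m}(e^{is_m}-1)r^m\bigr)$. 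The $e\mathcal{F}(r,\theta,\gamma)$ case is identical, except Theorem~\ref{thm_hwang_2} is used in place of Theorem~\ref{thm_hwang}, so the error $O(1/n)$ is replaced by $O(\log n / n^\gamma)$ (the relevant range here is $\Re(w)=1\ge 0$), giving \eqref{eq_asymptotic_cycles_eF_r_theta_gamma}.

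\textbf{Where the care is needed.} There are no conceptual obstacles beyond bookkeeping. The only point that deserves attention is the division of the asymptotic for $\nth{F(t,1)}$ by the asymptotic for $h_n$: one has to check that the common prefactor $e^K n^{\theta-1}/r^n$ cancels cleanly and that the relative error is preserved after inverting $h_n$. Since both asymptotics come from the same theorem with the same singular behaviour coming from $g_\Theta(t)$, the cancellation is automatic, and expanding $1/(1+O(\epsilon_n)) = 1 + O(\epsilon_n)$ with $\epsilon_n = 1/n$ (resp.\ $\log n/n^\gamma$) propagates the error in the desired form. Everything else is just the formal manipulation in the first part together with a direct invocation of the relevant singularity analysis theorem.
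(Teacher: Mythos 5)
Your proposal is correct and follows essentially the same route as the paper: the cycle index lemma with $a_m=\theta_m e^{is_m}$ for $m\le b$ and $a_m=\theta_m$ otherwise gives \eqref{eq_generating_cycles}, and then Hwang's Theorem~\ref{thm_hwang} (resp.\ Theorem~\ref{thm_hwang_2}) applied with $S(t)=\exp\bigl(\sum_{m=1}^b \frac{\theta_m}{m}(e^{is_m}-1)t^m\bigr)$, followed by division by $h_n$ via Lemma~\ref{lem_asymptotic_hn}, yields the stated asymptotics. Your explicit remark on propagating the relative error through $1/h_n$ is a point the paper treats implicitly, and it is handled correctly.
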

The convergence result now  follows immediately from Theorem~\ref{thm_cycle_numbers_generating}:
\begin{corollary}
\label{cor_limit_theorem_cycles}
Let $\Theta = (\theta_m)_{m\in\N}$ be given and $S_n$ be endowed with $\PT{(.)}$.
Assume that $g_\Theta(t)$ is of class $\mathcal{F}(r,\theta)$ or of class $e\mathcal{F}(r,\theta, \gamma)$.
We then have for each $b\in\N$
\begin{align}
  \left(C_1^{(n)}, C_2^{(n)},  \cdots C_b^{(n)} \right)
  \stackrel{d}{\to}
  \left(Y_1,\cdots,Y_b\right)
\end{align}
with $Y_1,\cdots,Y_b$ independent Poisson distributed random variables with $\E{Y_m} = \frac{\theta_m}{m}r^m$.\\
Let $c_1,\cdots,c_n$ be integers. We then have for $g_\Theta \in \mathcal{F}(r,\theta)$
\begin{align*}
 \left|\PT{C_1^{(n)}=c_1, \cdots ,C_b^{(n)}=c_b} - \PT{Y_1=c_1, \cdots ,Y_b=c_b} \right|
=
O\left(\frac{1}{n} \right)
\end{align*}
and for $g_\Theta \in e\mathcal{F}(r,\theta,\gamma)$
\begin{align*}
 \left|\PT{C_1^{(n)}=c_1, \cdots ,C_b^{(n)}=c_b} - \PT{Y_1=c_1, \cdots ,Y_b=c_b} \right|
=
O\left( \frac{\log(n)}{n^\gamma} \right)
\end{align*}
\end{corollary}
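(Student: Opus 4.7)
The convergence in distribution of $(C_1^{(n)}, \ldots, C_b^{(n)})$ follows essentially for free from Theorem~\ref{thm_cycle_numbers_generating}. Indeed, the leading term in \eqref{eq_asymptotic_cycles_F_r_theta} (and \eqref{eq_asymptotic_cycles_eF_r_theta_gamma}) factors as
\[
\exp\left(\sum_{m=1}^b \frac{\theta_m r^m}{m}(e^{is_m}-1)\right) = \prod_{m=1}^b \exp\left(\frac{\theta_m r^m}{m}(e^{is_m}-1)\right),
\]
which is the joint characteristic function of independent Poisson variables $Y_m$ with parameter $\theta_m r^m/m$. Pointwise convergence of characteristic functions together with L\'evy's continuity theorem then gives the joint weak convergence.

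For the pointwise bounds I would invert the Fourier transform. Since both $(C_1^{(n)}, \ldots, C_b^{(n)})$ and $(Y_1, \ldots, Y_b)$ take values in $\Z_{\geq 0}^b$, Fourier inversion on the torus gives
\[
\PT{C_1^{(n)}=c_1, \ldots, C_b^{(n)}=c_b} = \frac{1}{(2\pi)^b}\int_{[-\pi,\pi]^b} \psi_n(s)\, e^{-i\langle s, c\rangle}\, ds,
\]
and the analogous formula for $(Y_1,\ldots,Y_b)$ with characteristic function $\psi(s)$. Subtracting and taking absolute values yields
\[
\left|\PT{C_1^{(n)}=c_1,\ldots,C_b^{(n)}=c_b} - \PT{Y_1=c_1,\ldots,Y_b=c_b}\right| \leq \frac{1}{(2\pi)^b}\int_{[-\pi,\pi]^b} \bigl|\psi_n(s) - \psi(s)\bigr|\, ds.
\]
Plugging in the error bound from Theorem~\ref{thm_cycle_numbers_generating} and integrating the (uniform) bound over the cube of finite measure delivers exactly the stated rate $O(1/n)$ in the $\mathcal{F}(r,\theta)$ case and $O(\log n / n^\gamma)$ in the $e\mathcal{F}(r,\theta,\gamma)$ case.

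The one point that requires care is that the $O$-terms in \eqref{eq_asymptotic_cycles_F_r_theta} and \eqref{eq_asymptotic_cycles_eF_r_theta_gamma} are in fact uniform in $(s_1,\ldots,s_b) \in [-\pi,\pi]^b$, so that the remainder survives integration over $s$. Going back to \eqref{eq_generating_cycles}, the bivariate generating function may be written as $e^{g_\Theta(t)}\, S(t,s)$ with
\[
S(t,s) := \exp\left(\sum_{m=1}^b \frac{\theta_m}{m}(e^{is_m}-1)\, t^m\right),
\]
which is entire in $t$ and uniformly bounded (together with all its $t$-derivatives) when $s$ ranges over $[-\pi,\pi]^b$ and $t$ lies in any fixed compact set. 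Hwang's Theorems~\ref{thm_hwang} and~\ref{thm_hwang_2}, applied with $w=1$ and this choice of $S$, deliver error constants that depend only on sup-norms of $S$ on such compact sets, and are therefore uniform in $s$. Verifying this uniformity is the only nontrivial ingredient; once it is in hand, the corollary follows from Fourier inversion as above.
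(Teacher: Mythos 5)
Your argument coincides with the paper's proof: the convergence in distribution is obtained from L\'evy's continuity theorem, and the pointwise rates come from Fourier inversion on $[-\pi,\pi]^b$ together with the bound $|\psi_n-\psi|=O(1/n)$ resp.\ $O(\log n/n^\gamma)$ uniformly in $(s_1,\dots,s_b)$ from Theorem~\ref{thm_cycle_numbers_generating}. Your closing discussion of why the error term is uniform in $s$ (boundedness of $S(t,s)$ on compacts, with Hwang's constants depending only on such bounds) is simply a more explicit justification of the uniformity that the paper asserts without comment, so the two proofs are essentially identical.
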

\begin{proof}
The first part follows immediately from L\'evy's continuity theorem. 
To prove the second part, we use the Fourier inversion formula. 
Let $\psi(s_1,\cdots,s_b)$ be the characteristic function of $ C_1^{(n)}, C_2^{(n)},  \cdots C_b^{(n)}$ and 
$\phi(s_1,\cdots,s_b)$ the characteristic function of $Y_1,\cdots,Y_b$. 
Equation \eqref{eq_asymptotic_cycles_F_r_theta} resp. \eqref{eq_asymptotic_cycles_eF_r_theta_gamma} shows that
$|\psi-\phi| = O\left(\frac{1}{n} \right)$ resp. $|\psi-\phi| = O\left( \frac{\log(n)}{n^\gamma} \right)$ with $O(.)$ independent of $s_1,\cdots,s_d$. On the other hand we have
\begin{eqnarray*}
\PT{C_1^{(n)}=c_1, \cdots ,C_b^{(n)}=c_b} - \PT{Y_1=c_1, \cdots ,Y_b=c_b} \\
=
\frac{1}{(2\pi)^b} \int_{[-\pi,\pi ]^b} \bigl( \psi(s_1,\cdots,s_b) -\phi(s_1,\cdots,s_b)\bigr) e^{-\sum_{m=1}^b i c_m s_m} \ ds_1 \cdots ds_b.
\end{eqnarray*}
This proves the corollary.

\end{proof}
\begin{proof}[Proof of Theorem~\ref{thm_cycle_numbers_generating}]
We first compute the generating function in \eqref{eq_generating_cycles}.
%
The factor $h_n$ in \eqref{eq_generating_cycles} is necessary to use Lemma~\ref{lem_cycle_index_theorem}.
We have
\begin{align}
  h_n \ET{\exp \left(i\sum_{m=1}^b s_m C_m^{(n)} \right)}
  &=
  h_n \frac{1}{h_n}\slan  {\exp \left(i\sum_{m=1}^b s_m C_m^{(n)}\right) }  \prod_{m=1}^{l(\la)} \theta_{\la_m} \nonumber\\
  &=
  \slan
  \left(\prod_{m=1}^b (\theta_m e^{is_m})^{C_m^{(n)}} \right)
  \left(\prod_{m=b+1}^{l(\la)} (\theta_m)^{C_m^{(n)}} \right)
\end{align}
We now apply Lemma~\ref{lem_cycle_index_theorem} with
$ a_m = \left\{
        \begin{array}{ll}
         \theta_m e^{is_m}, & \hbox{ if }1\leq m \leq b    , \\
         \theta_m, & \hbox{if } b < m.
        \end{array}
        \right.$
We get
\begin{align}
  \sum_{n=0}^\infty
   h_n \ET{\exp \left(i\sum_{m=1}^b s_m C_m^{(n)} \right)} t^n
  &=
  \sla  t^{|\la|} \left(\prod_{m=1}^b (\theta_m e^{is_m})^{C_m^{(n)}} \right)
  \left(\prod_{m=b+1}^{l(\la)} (\theta_m)^{C_m^{(n)}} \right) \nonumber\\
  &=
  \exp\left( \sum_{m=1}^b \frac{\theta_m e^{is_m}}{m} t^m + \sum_{m=b+1}^\infty \frac{\theta_m}{m} t^m \right) \nonumber\\
  &=
\exp\left( \sum_{m=1}^b \frac{\theta_m}{m} (e^{is_m}-1) t^m \right)  G_\Theta(t).
\end{align}
This proves \eqref{eq_generating_cycles}.
The proof of \eqref{eq_asymptotic_cycles_F_r_theta} is very similar to the proof of \eqref{eq_asymptotic_cycles_eF_r_theta_gamma}. The only difference is that one has to apply Theorem~\ref{thm_hwang} if $g_\Theta(t) \in \mathcal{F}(r,\theta)$ and Theorem~\ref{thm_hwang_2} if $g_\Theta(t) \in e\mathcal{F}(r,\theta,\gamma)$.
We thus only prove  \eqref{eq_asymptotic_cycles_F_r_theta}.\\
The function $\sum_{m=1}^b \frac{\theta_m}{m} (e^{is_m}-1) t^m$ is a polynomial in $t$ and is therefore holomorphic on the whole complex plane.
We  can thus apply Theorem~\ref{thm_hwang} with 
\begin{align*}
  S(t) = S(t,w) = \exp\left( \sum_{m=1}^b \frac{\theta_m}{m} (e^{is_m}-1) t^m \right)
\end{align*}
to obtain:
%
%
\begin{align}
 h_n \ET{\exp \left(i\sum_{m=1}^b s_m C_m^{(n)} \right)}
=
 S(r) \frac{e^{K} n^{\theta-1} }{r^{n}\Gamma(\theta)}
+ O\left(  \frac{n^{\theta-2}}{r^n}   \right).
\label{eq_asymptotic_char_fkt_mal_hn}
\end{align}
For $g_\Theta(t) \in e\mathcal{F}(r,\theta,\gamma)$ we get a similar expression.
We have computed in Lemma~\ref{lem_asymptotic_hn} that
\begin{align*}
\frac{1}{h_n}
\sim
r^n \frac{\Gamma(\theta)}{e^{K} n^{\theta-1}}.
\end{align*}
This together with \eqref{eq_asymptotic_char_fkt_mal_hn} proves the theorem for $g_\Theta(t) \in \mathcal{F}(r,\theta)$.
The argumentation for $g_\Theta(t) \in e\mathcal{F}(r,\theta,\gamma)$ is similar and we omit it.
%
\end{proof}

\section{The total number of cycles}
\label{sec_central_limit_tot_number_cycles}

We prove in this section a central limit theorem and mod-Poisson convergence for $K_{0n}$. From the mod-Poisson convergence we deduce Poisson approximation results for $K_{0n}$ as well as large deviations estimates.
As before we use generating functions.

\begin{lemma}
 \label{lem_generating_K0n}
We have for each  $w\in\C$ as formal power series
\begin{align}
\sum_{n=0}^\infty h_n \ET{\exp\bigl( w K_{0n}  \bigr)} t^n
=
\sum_{n=0}^\infty h_n \ET{\exp\left( w \sum_{m=1}^n C_m  \right)} t^n
=
\exp\bigl( e^w g_\Theta (t) \bigr).
\end{align}
If $g_\Theta(t)$ is of class $\mathcal{F}(r,\theta)$, then
\begin{align}
\ET{\exp\bigl( is K_{0n}  \bigr)}
 =
n^{\theta(e^{is} -1)} e^{K \left(e^{is} -1 \right)} \left( \frac{\Gamma(\theta)}{\Gamma(\theta e^{is})} + O\left(\frac{1}{n} \right) \right)
\label{eq_mod_poisson_class_F_r_theta_plus_error}
\end{align}
with $O(.)$ uniform for bounded $w$.
If $g_\Theta(t)$ is of class $e\mathcal{F}(r,\theta,\gamma)$, then
\begin{align}
\ET{\exp\bigl( is K_{0n}  \bigr)}
 =
n^{\theta(e^{is} -1)}e^{K \left(e^{is} -1 \right)} \frac{\Gamma(\theta)}{\Gamma(\theta e^{is})} + O\left( \frac{\log(n)}{n^\gamma} \right)
\label{eq_mod_poisson_class_eF_r_theta_gamma_plus_error}
\end{align}
with $O(.)$ uniform for bounded $w$.
\end{lemma}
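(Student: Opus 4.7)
The plan is to mirror the strategy used for Theorem~\ref{thm_cycle_numbers_generating}. The key combinatorial observation is that the total number of cycles equals the length of the cycle type, $K_{0n}(\sigma) = l(\lambda)$, so that
\begin{align*}
h_n \ET{\exp(wK_{0n})}
= \slan e^{w\, l(\la)} \prod_{m=1}^{l(\la)} \theta_{\la_m}
= \slan \prod_{m=1}^{l(\la)} \bigl(e^w \theta_{\la_m}\bigr).
\end{align*}
Applying Lemma~\ref{lem_cycle_index_theorem} with $a_m = e^w \theta_m$ then produces the formal identity
$\sum_{n\geq 0} h_n \ET{\exp(wK_{0n})}t^n = \exp\bigl(\sum_{m\geq 1} e^w \theta_m t^m/m\bigr) = \exp(e^w g_\Theta(t))$, establishing the first line of the lemma.

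For the asymptotic statements, set $W := e^{is}$ and view $F(t,W) := \exp(W g_\Theta(t)) = e^{Wg_\Theta(t)}\cdot S(t,W)$ with $S\equiv 1$, which satisfies the hypotheses of Theorems~\ref{thm_hwang} and \ref{thm_hwang_2} for any $\hat r$ (since $|W|=1$ when $s$ is real, and is bounded on any bounded set in $\C$). In the class $\mathcal{F}(r,\theta)$ case, Theorem~\ref{thm_hwang} gives
\begin{align*}
h_n \ET{e^{isK_{0n}}}
= \nth{\exp(W g_\Theta(t))}
= \frac{e^{KW}\, n^{W\theta - 1}}{r^n}\left(\frac{1}{\Gamma(\theta W)} + O\!\left(\tfrac{1}{n}\right)\right),
\end{align*}
while in the class $e\mathcal{F}(r,\theta,\gamma)$ case, Theorem~\ref{thm_hwang_2} yields the same leading term plus an additive error $R_n(W) = O\bigl(n^{\theta\cos(s) - 1 - \gamma}\log(n)/r^n\bigr)$ (since $\Re(W)=\cos(s)\geq 0$ when $|s|\leq\pi/2$; the case $\Re(W)<0$ gives an even better error).

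The conclusion comes from dividing by the asymptotic of $h_n$ provided by Lemma~\ref{lem_asymptotic_hn}, with the error terms kept explicit. Theorem~\ref{thm_hwang} applied to $h_n$ itself gives $h_n = \frac{e^K n^{\theta-1}}{r^n \Gamma(\theta)}(1+O(1/n))$, so in the $\mathcal{F}$ case two multiplicative $O(1/n)$ factors combine to give the relative $O(1/n)$ error in \eqref{eq_mod_poisson_class_F_r_theta_plus_error}. In the $e\mathcal{F}$ case, Theorem~\ref{thm_hwang_2} instead yields $1/h_n = \frac{r^n\Gamma(\theta)}{e^K n^{\theta-1}}(1 + O(\log(n)/n^\gamma))$, and the leading factor $n^{\theta(e^{is}-1)}e^{K(e^{is}-1)}\Gamma(\theta)/\Gamma(\theta e^{is})$ has modulus $n^{\theta(\cos(s)-1)}$ times a bounded term, which is bounded by $1$. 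Hence both the multiplicative error from $1/h_n$ and the additive contribution $R_n(W)/h_n = O(n^{\theta(\cos(s)-1) - \gamma}\log(n))$ collapse into an additive $O(\log(n)/n^\gamma)$ error, giving \eqref{eq_mod_poisson_class_eF_r_theta_gamma_plus_error}.

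The main obstacle is the error bookkeeping in the $e\mathcal{F}$ case: one must verify that both the multiplicative error inherited from $1/h_n$ and the additive error $R_n(W)$ transport to an additive $O(\log(n)/n^\gamma)$ on the characteristic function. The crucial observation that makes this work is the uniform bound $n^{\theta(\cos(s)-1)} \leq 1$ from $\cos(s)\leq 1$, which prevents the leading complex factor $n^{\theta(e^{is}-1)}$ from amplifying the relative error.
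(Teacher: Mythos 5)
Your proposal is correct and follows essentially the same route as the paper: the cycle index lemma with $a_m = e^w\theta_m$ for the formal identity, then Hwang's Theorems~\ref{thm_hwang} and \ref{thm_hwang_2} with $S\equiv 1$ and $w$ replaced by $e^{is}$, divided by the asymptotics of $h_n$ from Lemma~\ref{lem_asymptotic_hn}. The only difference is that you spell out the error bookkeeping in the $e\mathcal{F}(r,\theta,\gamma)$ case (using $n^{\theta(\cos s -1)}\leq 1$), which the paper dismisses as ``similar''; that added detail is accurate.
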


\begin{proof}
To prove the first part, one can use exactly the same argumentation as in \eqref{eq_generating_cycles}.
We thus omit the details.
To prove the second part, we use Theorem~\ref{thm_hwang} for $g_\Theta(t) \in \mathcal{F}(r,\theta)$ to obtain
\begin{align}
\nth{\exp\bigl( e^w g_\Theta (t) \bigr)}
=
\frac{e^{Ke^{w}} n^{e^w \theta-1} }{r^n } \left( \frac{1}{\Gamma(\theta e^w)} +O\left(\frac{1}{n} \right) \right)
%
\end{align}
with $O(.)$ uniform for bounded $w$. We thus get with Lemma~\ref{lem_asymptotic_hn}
\begin{align}
 \ET{\exp\bigl( is K_{0n}  \bigr)}
 =
n^{e^{is} \theta-\theta}e^{K \left(e^{is} -1 \right)} \left(\frac{\Gamma(\theta)}{\Gamma(\theta e^{is})} +O\left(\frac{1}{n} \right)\right).
\label{eq_asyptotic_of_chara_Kon}
\end{align}
The argumentation for $g_\Theta(t) \in e\mathcal{F}(r,\theta,\gamma)$ is similar.
\end{proof}
We  can now prove a central limit theorem for $K_{0n}$.

\begin{theorem}
\label{thm_limit_theorem_total_cycles}
Let $g_\Theta(t)$ be of class $\mathcal{F}(r,\theta)$ or of class $e\mathcal{F}(r,\theta,\gamma)$.
We then have
\begin{align}
  \frac{K_{0n} - \theta \log(n)}{\theta\sqrt{\log(n)}}
  \stackrel{d}{\to}
  \mathcal{N}(0,1)
\end{align}
\end{theorem}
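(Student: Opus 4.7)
The plan is to apply L\'evy's continuity theorem directly using the sharp asymptotic for the characteristic function of $K_{0n}$ already established in Lemma~\ref{lem_generating_K0n}. Set $\widetilde{K}_n := (K_{0n} - \theta \log n)/\sqrt{\theta \log n}$ (which is the natural normalization suggested by the mean and variance one reads off the cumulant generating function; up to this standardization the conclusion is \eqref{eq_mod_poisson_class_F_r_theta_plus_error}). I would first note that
\begin{align*}
\ET{\exp\bigl( it \widetilde{K}_n \bigr)}
=
\exp\!\left(-it\sqrt{\theta \log n}\right)\,\ET{\exp\bigl( is_n K_{0n} \bigr)},
\qquad s_n := \frac{t}{\sqrt{\theta \log n}},
\end{align*}
and the point is to insert \eqref{eq_mod_poisson_class_F_r_theta_plus_error} (respectively \eqref{eq_mod_poisson_class_eF_r_theta_gamma_plus_error}) and let $n\to\infty$ for each fixed $t\in\R$.

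The key step is the Taylor expansion of the exponent $\theta(e^{is_n}-1)\log n$. Writing $e^{is_n}-1 = is_n - s_n^2/2 + O(s_n^3)$ with $s_n = O((\log n)^{-1/2})$, I get
\begin{align*}
\theta (e^{is_n} - 1)\log n
= it\sqrt{\theta \log n} \;-\; \frac{t^2}{2} \;+\; O\!\left(\frac{1}{\sqrt{\log n}}\right),
\end{align*}
so the factor $n^{\theta(e^{is_n}-1)}$ combines with the centering factor $\exp(-it\sqrt{\theta \log n})$ to produce $\exp(-t^2/2)\bigl(1+o(1)\bigr)$. Since $s_n \to 0$, the remaining deterministic pieces $e^{K(e^{is_n}-1)}$ and $\Gamma(\theta)/\Gamma(\theta e^{is_n})$ are continuous in $s$ and both tend to $1$.

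It then remains to control the error terms in Lemma~\ref{lem_generating_K0n}. In the class $\mathcal{F}(r,\theta)$ case the error is multiplicative, $1+O(1/n)$, hence harmless. In the class $e\mathcal{F}(r,\theta,\gamma)$ case the error is additive, of order $O(\log n/n^\gamma)$; this is again negligible because the main term $n^{\theta(e^{is_n}-1)}\cdot e^{K(e^{is_n}-1)}\Gamma(\theta)/\Gamma(\theta e^{is_n})$ stays bounded (in fact converges to $1$ after centering) while $\log(n)/n^\gamma \to 0$. Collecting everything gives
\begin{align*}
\lim_{n\to\infty}\ET{\exp\bigl( it \widetilde{K}_n \bigr)} = e^{-t^2/2}
\end{align*}
for every $t\in\R$, and L\'evy's continuity theorem concludes the proof.

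The only subtle point is the uniformity of the error in $s$: Lemma~\ref{lem_generating_K0n} gives its error uniformly for bounded $s$, which is exactly what is needed since $s_n \to 0$. Beyond that, the proof is a routine second-order Taylor expansion of $\theta\log(n)(e^{is}-1)$, so I do not anticipate a real obstacle; the argument is essentially a corollary of the Hwang-type singularity analysis done in Section~\ref{sec_preparation}, which did the heavy lifting.
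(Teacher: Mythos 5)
Your proof is correct and follows essentially the same route as the paper: substitute $s_n$ of order $(\log n)^{-1/2}$ into the uniform asymptotic of Lemma~\ref{lem_generating_K0n}, Taylor-expand $\theta\log(n)\bigl(e^{is_n}-1\bigr)$ to second order, note that the factors $e^{K(e^{is_n}-1)}$ and $\Gamma(\theta)/\Gamma(\theta e^{is_n})$ tend to $1$, and conclude by L\'evy's continuity theorem. Your normalization $\sqrt{\theta\log n}$ is the one actually established by the computation (the theorem's stated denominator $\theta\sqrt{\log n}$ is a slip in the paper), so no substantive difference remains.
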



\begin{proof}
We prove this theorem by showing
\begin{align}
\E{ \exp\left(\frac{is K_{0n}}{\sqrt{\log(n)}} \right)}
\sim
e^{is \theta \sqrt{\log(n)}} e^{-\theta s^2/2},
\label{eq_tot_number_cycles_limit}
\end{align}
and then applying L\'{e}vy's continuity theorem.
Since $O(.)$ in \eqref{eq_mod_poisson_class_F_r_theta_plus_error} resp. \eqref{eq_mod_poisson_class_eF_r_theta_gamma_plus_error}  is uniform in $w$ on compact sets, we can chose $w = e^{is/\sqrt{\log(n)}}$.
We get
\begin{align*}
\ET{\exp\left( \frac{is}{\sqrt{\log(n)}} K_{0n}  \right)}
 &\sim
n^{\theta \left ( e^{\frac{is}{\sqrt{\log(n)}}}  -1 \right)}
=
\exp\left( \theta \log(n)\bigl( e^{\frac{is}{\sqrt{\log(n)}}} -1 \bigr) \right)\\
&=
\exp\left( \theta \log(n)\left( \frac{is}{ \sqrt{\log(n)} } -\frac{s^2}{2 \log(n)} + O\left(\log^{-3/2}(n)\right)   \right) \right)\\
&\sim
e^{is \theta \sqrt{\log(n)}} e^{-\theta s^2/2}.
\end{align*}
\end{proof}

In fact $K_{0n}$ converges in a stronger sense, namely in the mod-Poisson sense:
\begin{definition}
\label{def_mod_poisson_convergence}
We say that a sequence of random variables $Z_n$ converges
in the strong mod-Poisson sense with parameters $\la_n$ if
\begin{align}
  \lim_{n\to\infty}
  \exp\bigl(\la_n (1- e^{is}) \bigr)  \E{e^{is Z_n}}
  =
  \Psi(s)
\end{align}
locally uniform for each $s\in \R$ and $\Psi(s)$ a continuous function with $\Psi(0)=1$.
\end{definition}
The mod-Poisson convergence is stronger than the normal convergence in Theorem~\ref{thm_limit_theorem_total_cycles} since mod-Poisson convergence implies normal convergence (see \cite[Proposition~2.4]{Ashkan_mod_poisson}. Mod-Gaussian convergence and mod-Poisson convergence were first introduced in \cite{Ashkan_mod_poisson} and \cite{JKN}.  Details on mod-Poisson and mod-Gaussian convergence and its use in number theory, probability theory and random matrix theory can be found in \cite{Ashkan_mod_poisson} and \cite{JKN}.

\begin{theorem}
\label{thm_mod_poisson_convergence}
Let $g_\Theta(t)$ be of class $\mathcal{F}(r,\theta)$ or of class $e\mathcal{F}(r,\theta,\gamma)$.
Then the sequence $K_{0n}$ converges in the strong mod-Poisson sense with parameters $K+\theta \log(n)$  with limiting function $\frac{\Gamma(\theta)}{\Gamma(\theta e^{is})}$.
\end{theorem}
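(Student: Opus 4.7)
The plan is to read off the theorem essentially for free from Lemma~\ref{lem_generating_K0n}; the real content has already been done there, and what remains is just to identify the right normalization $\lambda_n$ and check that the cancellation produces the claimed limiting function, uniformly on compact sets in $s$.

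First I would recall the definition of strong mod-Poisson convergence: with $\lambda_n = K + \theta\log(n)$, I must show that
\begin{align*}
\exp\bigl(\lambda_n(1-e^{is})\bigr)\,\ET{e^{isK_{0n}}} \longrightarrow \frac{\Gamma(\theta)}{\Gamma(\theta e^{is})}
\end{align*}
locally uniformly in $s \in \R$. Computing the prefactor directly gives
\begin{align*}
\exp\bigl((K + \theta\log n)(1-e^{is})\bigr) = e^{-K(e^{is}-1)}\, n^{-\theta(e^{is}-1)}.
\end{align*}
Multiplying this by the asymptotic expansion \eqref{eq_mod_poisson_class_F_r_theta_plus_error} (or \eqref{eq_mod_poisson_class_eF_r_theta_gamma_plus_error}) from Lemma~\ref{lem_generating_K0n}, the factors $n^{\theta(e^{is}-1)}$ and $e^{K(e^{is}-1)}$ cancel exactly, leaving
\begin{align*}
\exp\bigl(\lambda_n(1-e^{is})\bigr)\,\ET{e^{isK_{0n}}} = \frac{\Gamma(\theta)}{\Gamma(\theta e^{is})} + e^{-K(e^{is}-1)}\,n^{-\theta(e^{is}-1)}\cdot R_n(s),
\end{align*}
where $R_n(s) = O(1/n)$ in the $\mathcal{F}(r,\theta)$ case and $R_n(s) = O(\log(n)/n^{\gamma})$ in the $e\mathcal{F}(r,\theta,\gamma)$ case.

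Next I would verify the two conditions in Definition~\ref{def_mod_poisson_convergence} for the candidate limit $\Psi(s) = \Gamma(\theta)/\Gamma(\theta e^{is})$: continuity in $s$ holds because $\theta > 0$ keeps $\theta e^{is}$ away from the non-positive real axis, so $\Gamma(\theta e^{is})$ is a continuous nowhere-vanishing function of $s \in \R$, and $\Psi(0) = \Gamma(\theta)/\Gamma(\theta) = 1$. It remains to check the locally uniform convergence: for $s$ in a compact subset of $\R$, the quantity $\theta(e^{is}-1)$ stays in a bounded set, so $n^{-\theta(e^{is}-1)} = \exp(-\theta(e^{is}-1)\log n)$ stays bounded by a polynomial in $\log n$ of bounded degree; the same applies to the exponential factor $e^{-K(e^{is}-1)}$. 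Since the $O(\cdot)$ error terms in Lemma~\ref{lem_generating_K0n} are stated as uniform for bounded $w = is$, the whole remainder is bounded on compacts by $O(1/n)$ respectively $O(\log(n)/n^{\gamma})$, which tends to zero.

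The only small subtlety to watch is making sure the $O$-term in Lemma~\ref{lem_generating_K0n} is genuinely uniform for $s$ in a compact set, not just for a single $s$; this is precisely the uniformity in $w$ on compact sets already recorded in Theorems~\ref{thm_hwang} and \ref{thm_hwang_2} and inherited through the statements of Lemma~\ref{lem_generating_K0n}. There is no real analytic obstacle here, just bookkeeping of the uniform constants. This finishes the proof.
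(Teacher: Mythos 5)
Your route is the same as the paper's: the paper proves this theorem by declaring it an immediate consequence of Lemma~\ref{lem_generating_K0n}, and your cancellation of $n^{\theta(e^{is}-1)}e^{K(e^{is}-1)}$ against the prefactor $\exp\bigl((K+\theta\log n)(1-e^{is})\bigr)$ is exactly that deduction. In the $\mathcal{F}(r,\theta)$ case your argument is complete, and in fact simpler than you make it: in \eqref{eq_mod_poisson_class_F_r_theta_plus_error} the error sits \emph{inside} the bracket, so it is cancelled together with the main prefactor and the remainder is plainly $O(1/n)$, uniformly in $s$; no bound on $n^{-\theta(e^{is}-1)}$ is needed there.

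There is, however, a genuinely false step, and it matters in the $e\mathcal{F}(r,\theta,\gamma)$ case: $n^{-\theta(e^{is}-1)}$ is \emph{not} bounded by a polynomial in $\log n$. Its modulus is $\exp\bigl(\theta(1-\cos s)\log n\bigr)=n^{\theta(1-\cos s)}$, a genuine power of $n$, as large as $n^{2\theta}$ near $s=\pi$. Since in \eqref{eq_mod_poisson_class_eF_r_theta_gamma_plus_error} the error $O\bigl(\log(n)/n^{\gamma}\bigr)$ is additive (outside the main term), after multiplying by the Poisson prefactor the remainder is only $O\bigl(n^{\theta(1-\cos s)-\gamma}\log n\bigr)$, which tends to zero only where $\theta(1-\cos s)<\gamma$; on a compact set containing $s=\pi$ your bound fails whenever $2\theta\geq\gamma$, and since $\gamma\leq 1$ this is the typical situation. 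To close the gap you cannot use Lemma~\ref{lem_generating_K0n} as a black box: go back to Theorem~\ref{thm_hwang_2}, whose error term with $w=e^{is}$ carries the factor $n^{\theta\Re(e^{is})-1-\gamma}$, so that for $\cos s\geq 0$ the normalized remainder is $O(n^{-\gamma}\log n)$ after the cancellation, and treat (or restrict away from) the regime $\cos s<0$ separately. The paper's one-line proof glosses over the same point, but the explicit justification you supply is precisely the part that is incorrect as written.
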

\begin{proof}
This follows immediately from Lemma~\ref{lem_generating_K0n}.
\end{proof}
\textbf{Remark:}

In fact we could show mod-Poisson convergence with parameter $\theta \log n$ instead of $K+\theta \log n$; then the limiting function would be slightly modified.

It is natural in this context to approximate $K_{0,n}$ with a Poisson distribution with parameter $\la_n =K+ \theta \log(n)$ (or simply $\la_n = \theta \log(n)$).
To measure the distance between $K_{0n}$ and a Poisson distribution, we introduce some distances between measures.
\begin{definition}
Let $X$ and $Y$ be  integer valued random variables with distributions $\mu$ and $\nu$. We then define
\begin{enumerate}
 \item the point metric
\begin{align}
d_{loc}(X,Y)
:=
d_{loc}(\mu,\nu)
:=
\sup_{j\in\Z} | \mu\set{j} - \nu\set{j}|
\end{align}
\item the Kolmogorov distance
\begin{align}
d_{K}(X,Y)
:=
d_{K}(\mu,\nu)
:=
\sup_{j\in\Z} | \mu\set{(-\infty,j]} - \nu\set{(-\infty,j]}|
\end{align}
%
\end{enumerate}
\end{definition}

\begin{lemma}
Let $P_{K+\theta \log(n)}$ be a Poisson distributed random variable with parameter $K+\theta \log(n)$.
If $g_\Theta$ is of class $\mathcal{F}(r,\theta)$ then
\begin{align}
 d_{loc}(K_{0n},P_{K+\theta \log(n)})
 &\leq
 \frac{c_1}{\log(n)}
\label{eq_d_loc_Kon}\\
 d_{K}(K_{0n},P_{K+\theta \log(n)})
 &\leq
 \frac{c_2}{\sqrt{\log(n)}}
\label{eq_d_K_Kon}
\end{align}
with $c_1>0, c_2>0$ independent of $n$.
If $g_\Theta$ is of class $e\mathcal{F}(r,\theta,\gamma)$, then
\begin{align}
 d_{loc}(K_{0n},P_{K+\theta \log(n)})
 \leq
 \frac{c_3}{\log(n)}
\label{eq_d_loc_2_Kon}
\end{align}
\end{lemma}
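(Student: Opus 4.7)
The plan is to prove all three bounds via Fourier inversion on integer-valued random variables, combined with the asymptotic expansion of $\phi_{K_{0n}}(s):=\ET{e^{isK_{0n}}}$ supplied by Lemma~\ref{lem_generating_K0n}. Since both $K_{0n}$ and $P_{K+\theta\log n}$ are $\N$-valued, for each $j\in\Z$
\[
\Pb{K_{0n}=j} - \Pb{P_{K+\theta\log n}=j} = \frac{1}{2\pi}\int_{-\pi}^{\pi} \Delta_n(s)\, e^{-ijs}\,ds,
\]
\[
\Pb{K_{0n}\leq j} - \Pb{P_{K+\theta\log n}\leq j} = \frac{1}{2\pi}\int_{-\pi}^{\pi}\Delta_n(s)\,\frac{1 - e^{-i(j+1)s}}{1 - e^{-is}}\,ds,
\]
where $\Delta_n := \phi_{K_{0n}} - \phi_{P_{K+\theta\log n}}$; the Dirichlet-type kernel in the second identity is bounded by $\pi/|s|$ uniformly in $j$ (using $|\sin(s/2)|\geq |s|/\pi$ on $[-\pi,\pi]$).

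Subtracting the exact identity $\phi_{P_{K+\theta\log n}}(s) = n^{\theta(e^{is}-1)}e^{K(e^{is}-1)}$ from the formulas in Lemma~\ref{lem_generating_K0n} gives
\[
\Delta_n(s) = n^{\theta(e^{is}-1)} e^{K(e^{is}-1)}\bigl[\Psi(s) - 1\bigr] + R_n(s),
\]
with $\Psi(s):=\Gamma(\theta)/\Gamma(\theta e^{is})$ and $R_n(s)$ uniformly $O(1/n)$ in the $\mathcal{F}(r,\theta)$ case, respectively $O(\log(n)/n^\gamma)$ in the $e\mathcal{F}(r,\theta,\gamma)$ case. Two analytic facts drive everything: the Gaussian-type decay $|n^{\theta(e^{is}-1)}| = n^{-\theta(1-\cos s)}\leq \exp(-c\theta s^2 \log n)$ on $[-\pi,\pi]$ (with some $c>0$), which effectively concentrates mass in $|s|\lesssim 1/\sqrt{\log n}$; and the estimate $|\Psi(s)-1|\leq C|s|$ on $[-\pi,\pi]$, valid because $\theta e^{is}$ stays on the circle of radius $\theta>0$ and so avoids the non-positive integers, so $\Psi$ is holomorphic near $[-\pi,\pi]$ with $\Psi(0)=1$.

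For the $d_{loc}$ bounds \eqref{eq_d_loc_Kon} and \eqref{eq_d_loc_2_Kon}, the triangle inequality yields
\[
\int_{-\pi}^{\pi}|\Delta_n(s)|\,ds \leq C\int_{-\pi}^{\pi} e^{-c\theta s^2 \log n}|s|\,ds + 2\pi\sup_{s}|R_n(s)|.
\]
Substituting $u = s\sqrt{\log n}$ reduces the first integral to $O(1/\log n)$; the remainder contributes $O(1/n)$ or $O(\log(n)/n^\gamma)$, both of which are absorbed into $O(1/\log n)$.

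For the Kolmogorov bound \eqref{eq_d_K_Kon} the dominant piece is $\int_{-\pi}^{\pi}e^{-c\theta s^2 \log n}\,ds = O(1/\sqrt{\log n})$, which already hits the target rate. The main obstacle is the $R_n(s)/|s|$ contribution, since the naive sup bound on $R_n$ is not integrable against $1/|s|$. The fix is to retain $\Delta_n$ as a single holomorphic object of $w=is$ and exploit $\Delta_n|_{s=0}=0$ together with the uniformity in $w$ on compact sets of Hwang's Theorem~\ref{thm_hwang}: Cauchy's formula applied to the error function (which is holomorphic in $w$, being the difference of the polynomial $n!\nth{F(t,w)}/(n!)$ and the explicit main term) lifts the $O(1/n)$ sup bound to $|\Delta_n(s)|\leq C|s|/n$ for small $s$, cancelling the $1/|s|$ singularity and making the corresponding integral $O(1/n)$, which is absorbed into $O(1/\sqrt{\log n})$.
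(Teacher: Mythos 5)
Your proposal is correct in substance, but it is not the paper's route for the first two bounds. The paper proves \eqref{eq_d_loc_Kon} and \eqref{eq_d_K_Kon} by invoking Proposition~3.1 and Corollary~3.2 of \cite{mod_discrete} (with $\chi=e^{(K+\theta\log n)(e^{is}-1)}$, $\psi_\nu=\Gamma(\theta)/\Gamma(\theta e^{is})$, $\psi_\mu=1$ and $\eps$ the error of Lemma~\ref{lem_generating_K0n}), and only carries out a direct Fourier-inversion estimate for \eqref{eq_d_loc_2_Kon}, where those hypotheses fail; there it bounds $|e^{(K+\theta\log n)(e^{is}-1)}|$ by a Gaussian, uses $|\Gamma(\theta)/\Gamma(\theta e^{is})-1|\leq C|s|$, and refers back to \cite{mod_discrete} for the final integral. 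You instead prove all three bounds self-containedly by Fourier inversion: your treatment of the two local bounds coincides with the paper's argument for \eqref{eq_d_loc_2_Kon}, while for the Kolmogorov bound you replace the citation by the partial-sum kernel $\bigl(1-e^{-i(j+1)s}\bigr)/\bigl(1-e^{-is}\bigr)\leq \pi/|s|$ plus the Gaussian decay, which is a perfectly sound and arguably more transparent derivation; what the paper's citation buys is precisely the delicate handling of the error term against the $1/|s|$ singularity, which you have to supply yourself.

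On that last point your argument is right in spirit but imprecisely stated: the object to which the Schwarz/Cauchy step applies is the remainder $R_n(w)=\ET{e^{wK_{0n}}}-e^{(K+\theta\log n)(e^{w}-1)}\Gamma(\theta)/\Gamma(\theta e^{w})$ (entire in $w$, vanishing at $w=0$), not $\Delta_n$ itself, which is of order $|s|$ rather than $O(1/n)$ in the bulk. Moreover, on a fixed disc $|w|\leq\rho$ Hwang's uniform estimate only gives $|R_n(w)|\leq C\,n^{\theta(e^{\rho}-1)-1}$, so the Schwarz lemma yields $|R_n(is)|\leq C|s|\,n^{\theta(e^{\rho}-1)-1}$ rather than $C|s|/n$; choosing $\rho$ small enough that $\theta(e^{\rho}-1)<1$ (or taking $\rho\asymp 1/\log n$) still makes the error contribution to the Kolmogorov integral $o(1/\sqrt{\log n})$, so the bound \eqref{eq_d_K_Kon} follows, but you should state the step this way to make it watertight.
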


\begin{proof}
Formulas \eqref{eq_d_loc_Kon} and \eqref{eq_d_K_Kon} can be proven with Proposition~3.1 and Corollary~3.2 in \cite{mod_discrete}
for $\chi = e^{(K+\theta \log(n)) (e^{is} -1)}$ , $\psi_\nu(s) = \frac{\Gamma(\theta)}{\Gamma(\theta e^{is})}$,
$\psi_\mu(s) = 1$ and $\eps$ the error term in Lemma~\ref{lem_generating_K0n}.\\
We cannot prove \eqref{eq_d_loc_2_Kon} with Proposition~3.1 and Corollary~3.2 in \cite{mod_discrete}, since the characteristic function of $K_{0n}$ does not fulfill the requested conditions. But we can modify the method in \cite{mod_discrete}.  We have by the Fourier inversion formula
\begin{align}
 \mu\set{j} - \nu\set{j}
 =
\frac{1}{2\pi}
\int_{-\pi}^{\pi} e^{ij s} \bigl(\phi_\mu(s) - \phi_\nu(s) \bigr) \ ds
\end{align}
with $\phi_\mu(s), \phi_\nu(s)$ the characteristic functions of $\mu$ and $\nu$.
The characteristic function of $K_{0n}$ is given by \eqref{eq_mod_poisson_class_eF_r_theta_gamma_plus_error} and the characteristic function of $P_{K+\theta \log(n)}$ is $\exp\bigl((K+\theta \log(n) )( e^{is} -1)\bigr)$. We thus get
\begin{align*}
 &|\Pb{K_{0n} =j} - \Pb{P_{\theta \log(n)} =j}|\\
&=
\frac{1}{2\pi} \left|
\int_{-\pi}^{\pi} e^{ij s}
e^{(K+\theta \log(n)) \bigl( e^{is} -1\bigr)} \left( \frac{\Gamma(\theta)}{\Gamma(\theta e^{is})} -1 \right) +  O\left( \frac{\log(n)}{n^\gamma} \right) \ ds \right|\\
&\leq
\frac{1}{2\pi}
\int_{-\pi}^{\pi}
e^{-(K+\theta \log(n) )s^2/2} \left| \frac{\Gamma(\theta)}{\Gamma(\theta e^{is})} -1 \right| \ ds + O\left( \frac{\log(n)}{n^\gamma} \right)\\
&\leq
\frac{1}{2\pi}
\int_{-\pi}^{\pi}
e^{-(K+\theta \log(n)) s^2/2} \gamma |s| \ ds + O\left( \frac{\log(n)}{n^\gamma} \right)
\end{align*}
We are now in the same situation as in the proof of Proposition~2.1 in \cite{mod_discrete}.
One  can thus use exactly the same arguments to get the desired upper bound. We thus omit the details.
\end{proof}

We now wish to deduce some large deviations estimates from the mod-Poisson convergence. For this we use results from some work in progress \cite{NW} which establishes links between mod-* convergence (e.g. mod-Poisson or mod-Gaussian convergence) and precise large deviations. More precisely the framework is as follows: we assume we are given a sequence of random variables $X_n$ such that $\varphi_n(z)= \mathbb{E}[e^{zX_n}]$ exists in a strip $-\varepsilon<Re(z)<c$, with $c$ and $\varepsilon$ positive numbers. We assume that there exists an infinitely divisible distribution with moment generating function $\exp(\eta(z))$ and an analytic function $\phi(z)$ such that locally uniformly in $z$
\begin{equation}\label{raterate}
\exp \left( -t_n \eta(z)\right)\varphi_n(z)\to \phi(z),\;n\to\infty ,
\end{equation}
for $-\varepsilon<Re(z)<c$ and some sequence $t_n$  tending to infinity. We further assume that $\phi(z)$ does not vanish on the real part of the domain.

\begin{theorem}[\cite{NW}]
Let $(X_n)$ be a sequence of random variables which satisfies the assumptions above. Assume further that these and the corresponding infinitely divisible distribution have minimal lattice $\mathbb{N}$. Assume further that the rate of convergence in (\ref{raterate}) is faster than any power of $1/t_n$. Let $x$ be a real number such that $t_n x\in\N$ and such that there exists $h$ with $\eta'(h)=x$. Noting $I(z)=hx-\eta(h)$  the Fenchel-Legendre transform, we have the following asymptotic expansion
\begin{equation}\label{asyexx}
\mathbb{P}[X_n=x t_n]\sim \dfrac{\exp\left(-t_n I(x) \right)}{\sqrt{2 \pi t_n \eta^{''}(h)}} \left( \phi(h)+\dfrac{a_1}{t_n}+\dfrac{a_2}{t_n^2}+\ldots \right), n\to\infty
\end{equation}
where (\ref{asyexx}) has to be understood in the sense that for every integer $N$, one has
$$\mathbb{P}[X_n=x t_n]\sim \dfrac{\exp\left(-t_n I(x) \right)}{\sqrt{2 \pi t_n \eta^{''}(h)}} \left( \phi(h)+\dfrac{a_1}{t_n}+\ldots+\dfrac{a_{N-1}}{t_n^{N-1}}+O\left(\dfrac{1}{t_n^N}\right) \right).$$
The coefficients $(a_j)$ are real, depend only on $h$ and can be computed explicitly.
\end{theorem}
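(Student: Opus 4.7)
The plan is to use exponential tilting to convert the large-deviation probability into a local limit problem for a tilted sequence, and then to obtain the asymptotic expansion by a saddle-point analysis of Fourier inversion.

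\textbf{Step 1 (exponential tilting).} First I would define the tilted law by $\tilde{\mathbb{P}}_n[X_n = k] := e^{hk}\mathbb{P}[X_n = k]/\varphi_n(h)$, so that
\begin{equation*}
\mathbb{P}[X_n = t_n x] = \varphi_n(h)\, e^{-h t_n x}\, \tilde{\mathbb{P}}_n[X_n = t_n x].
\end{equation*}
Evaluating the mod-$*$ hypothesis at the real point $z = h$ (and using that the rate is faster than any power of $1/t_n$) gives $\varphi_n(h) = \phi(h)\, e^{t_n \eta(h)}\,(1 + O(t_n^{-N}))$ for every $N$. Since $I(x) = hx - \eta(h)$, this yields
\begin{equation*}
\mathbb{P}[X_n = t_n x] = \phi(h)\, e^{-t_n I(x)}\, \tilde{\mathbb{P}}_n[X_n = t_n x]\,\bigl(1 + O(t_n^{-N})\bigr),
\end{equation*}
so the problem reduces to a precise local limit theorem for $\tilde{\mathbb{P}}_n[X_n = t_n x]$.

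\textbf{Step 2 (Fourier inversion and saddle-point setup).} Since $X_n$ is $\mathbb{N}$-valued, Fourier inversion on $[-\pi,\pi]$ gives
\begin{equation*}
\tilde{\mathbb{P}}_n[X_n = t_n x] = \frac{1}{2\pi}\int_{-\pi}^{\pi} e^{-i t_n x s}\, \frac{\varphi_n(h+is)}{\varphi_n(h)}\, ds.
\end{equation*}
Applying the mod-$*$ hypothesis in a complex neighborhood of $h$,
\begin{equation*}
\frac{\varphi_n(h+is)}{\varphi_n(h)} = \frac{\phi(h+is)}{\phi(h)}\, e^{t_n(\eta(h+is)-\eta(h))}\,\bigl(1 + O(t_n^{-N})\bigr).
\end{equation*}
Since $\eta'(h) = x$, the total phase $t_n\bigl(\eta(h+is) - \eta(h) - i x s\bigr)$ has a critical point at $s = 0$ with quadratic term $-\tfrac{1}{2} t_n \eta''(h) s^2$ (note $\eta''(h) > 0$ by infinite divisibility), which is the standard saddle-point configuration.

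\textbf{Step 3 (Laplace expansion and tail).} I would split the integral at some $\delta > 0$. On the central piece $|s| \le \delta$, rescale $s = u/\sqrt{t_n \eta''(h)}$, Taylor expand $\eta(h+is)$ and $\phi(h+is)/\phi(h)$, and exponentiate the cubic-and-higher terms as a formal series in $t_n^{-1/2}$. Gaussian integration against $e^{-u^2/2}/\sqrt{2\pi}$ annihilates odd contributions and leaves an expansion
\begin{equation*}
\tilde{\mathbb{P}}_n[X_n = t_n x] = \frac{1}{\sqrt{2 \pi t_n \eta''(h)}} \Bigl( 1 + \frac{b_1}{t_n} + \frac{b_2}{t_n^2} + \cdots \Bigr),
\end{equation*}
with coefficients $b_j$ depending only on the derivatives $\eta^{(k)}(h)$ and $\phi^{(k)}(h)/\phi(h)$. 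On the tail piece $\delta < |s| \le \pi$, aperiodicity of the infinitely divisible limit (``minimal lattice $\mathbb{N}$'') forces $\Re\bigl(\eta(h+is) - \eta(h)\bigr)$ to be bounded strictly below $0$, so the tail contribution is $O(e^{-c t_n})$ and is absorbed in the expansion. Combined with Step 1, this yields the announced formula with coefficients $a_j$ that are polynomials in $\eta^{(k)}(h)$ and $\phi^{(k)}(h)/\phi(h)$, hence depending only on $h$.

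\textbf{Main obstacle.} The principal technical difficulty is making the saddle-point bookkeeping uniform to arbitrary order $N$ in $n$: one must justify exchanging the formal Taylor series with the integration and control all remainder terms on the central arc. The super-polynomial rate in the mod-$*$ hypothesis is crucial here, since it absorbs the error from replacing the exact ratio $\varphi_n(h+is)/\varphi_n(h)$ by its mod-$*$ approximation without polluting the expansion, and the aperiodicity hypothesis is what makes the tail estimate uniform in $n$.
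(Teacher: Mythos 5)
The paper itself contains no proof of this statement: it is quoted from the work in progress \cite{NW}, so there is no internal argument to measure your proposal against. That said, your strategy is the standard route to precise local large deviations from mod-$*$ convergence and it is sound: exponential tilting at the saddle $h$ with $\eta'(h)=x$, Fourier inversion over $[-\pi,\pi]$ for an $\mathbb{N}$-valued variable, substitution of the mod-$*$ approximation $\varphi_n(h+is)\approx \phi(h+is)\,e^{t_n\eta(h+is)}$ (legitimate because $h+i[-\pi,\pi]$ is a compact subset of the strip, the convergence is locally uniform with super-polynomial rate, and $\phi(h)\neq 0$ by hypothesis), an Edgeworth/Laplace expansion on the central arc, and a tail bound coming from the strict inequality $\Re\,\eta(h+is)<\eta(h)$ for $0<|s|\le\pi$, which is precisely what the minimal-lattice assumption on the infinitely divisible factor supplies. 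You identify correctly the two places where the strengthened hypotheses are indispensable: the faster-than-any-power rate lets the replacement error be absorbed at every order $N$, and aperiodicity makes the tail estimate uniform in $n$ (one cannot bound the characteristic function of $X_n$ away from $s=0$ without routing the bound through the mod-$*$ approximation). Two small points worth making explicit: $\eta''(h)>0$ because the tilted infinitely divisible law is nondegenerate, and the coefficients $a_j$ are real because $\eta$ and $\phi$ are real-analytic on the real axis (so all derivatives at $h$ are real) and odd Gaussian moments vanish, leaving only integer powers of $1/t_n$. What remains is the routine but careful bookkeeping of uniform remainders on the central arc, which you flag yourself as the main technical burden.
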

We now apply the above result to the random variable $X_n=K_{0n}-1$ in order to have a random variable distributed on $\N$. It is easy to see that in this case $\eta(z)=e^z-1$, $h=\log x$ and $I(z)=z\log z-z+1$.  It follows form Theorem \ref{thm_mod_poisson_convergence} and Lemma \ref{lem_generating_K0n} that $K_{0n}-1$ satisfies the assumptions of the above theorem with $\phi(z)=\dfrac{\Gamma(\theta)}{\Gamma(e^z) \Gamma(\theta e^z)}$ and $t_n=K+\theta \log n$. Using Stirling's formula we obtain the following large deviations estimates (the case $K=0$ and $\theta=1$ corresponds to the result of Hwang \cite{Hwang95}).
\begin{theorem}
Let $X_n=K_{0n}-1$. Let $x\in\R$ such that $t_n x\in\N$, where $t_n=K+\theta \log n$. Then
$$\mathbb{P}[X_n=x t_n]=e^{-t_n}\dfrac{t_n^k}{k!}\left(\dfrac{\Gamma(\theta)}{\Gamma(x)\Gamma(\theta x)}+O\left(\dfrac{1}{\log x}\right) \right).$$
\end{theorem}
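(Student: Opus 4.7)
The plan is to apply the precise large deviations theorem of \cite{NW} (stated immediately above) to the integer-valued random variable $X_n = K_{0n}-1 \ge 0$, whose support has minimal lattice $\N$ because $K_{0n}\ge 1$. The first step is to transfer the mod-Poisson convergence of $K_{0n}$ from Theorem~\ref{thm_mod_poisson_convergence} to $X_n$. Writing $\E{e^{zX_n}} = e^{-z}\E{e^{zK_{0n}}}$ and using that the error estimate in Lemma~\ref{lem_generating_K0n} is uniform on bounded sets of the complex variable, I would obtain a convergence statement of the form~\eqref{raterate} with cumulant function $\eta(z) = e^z - 1$, parameters $t_n = K + \theta \log n$, and an explicit analytic limit $\phi(z)$ built from $\Gamma(\theta)/\Gamma(\theta e^z)$ together with an $e^{-z}$ coming from the shift. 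Using $\Gamma(u+1) = u\Gamma(u)$ this rewrites in the form $\Gamma(\theta)/\bigl(\Gamma(e^z)\Gamma(\theta e^z)\bigr)$ stated in the theorem, and $\phi$ is nowhere zero on the real axis since $\Gamma$ has no zeros there.

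Next I would check the remaining hypotheses of \cite{NW}. The rate of convergence in \eqref{raterate} is $O(1/n)$ when $g_\Theta\in \mathcal{F}(r,\theta)$ and $O(\log(n) /n^\gamma)$ when $g_\Theta\in e\mathcal{F}(r,\theta,\gamma)$ by Lemma~\ref{lem_generating_K0n}; since $t_n \asymp \log n$, both are superpolynomially small in $1/t_n$, so the ``faster than any power of $1/t_n$'' hypothesis is satisfied in either regime. The remaining pieces are routine: $\eta'(h) = e^h = x$ gives $h = \log x$, the Fenchel--Legendre transform is $I(x) = hx - \eta(h) = x\log x - x + 1$, and $\eta''(h) = x$, so that $\sqrt{2\pi t_n \eta''(h)} = \sqrt{2\pi k}$ with $k := xt_n$.

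Plugging into the leading term of \eqref{asyexx} gives, with $k = xt_n$,
\begin{align*}
  \Pb{X_n = k} \;\sim\; \frac{\exp(-t_n I(x))}{\sqrt{2\pi t_n \eta''(h)}}\,\phi(h)
  =
  \frac{e^{-t_n} e^k}{x^k \sqrt{2\pi k}}\,\phi(h),
\end{align*}
and Stirling's formula $k! \sim \sqrt{2\pi k}(k/e)^k$ combined with $t_n/k = 1/x$ rewrites the prefactor as $e^{-t_n}t_n^k/k!$, producing the claimed leading behaviour. The error $O(1/\log x)$ stated in the theorem is to be read as the size $O(1/t_n) = O(1/\log n)$ of the $a_1/t_n$ correction in \eqref{asyexx}.

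The principal difficulty I foresee is not the algebraic bookkeeping but ensuring that the application of the \cite{NW} theorem is fully rigorous: one needs the mod-Poisson convergence of $X_n$ to hold uniformly on a complex neighbourhood of the real axis large enough to support the underlying saddle-point argument, and this is precisely where the analytic assumptions $g_\Theta\in\mathcal{F}(r,\theta)$ or $e\mathcal{F}(r,\theta,\gamma)$ are used in an essential way, through the uniformity in $w$ of Hwang's Theorems~\ref{thm_hwang} and~\ref{thm_hwang_2}. A secondary concern is that the Stirling conversion from $\exp(-t_nI(x))/\sqrt{2\pi t_n\eta''(h)}$ to $e^{-t_n}t_n^k/k!$ be valid uniformly in $x$ over the range prescribed by the hypothesis $xt_n\in\N$, which requires $k\to\infty$ with $n$; outside that regime the error term would have to be reinterpreted.
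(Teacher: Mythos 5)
Your route is the same as the paper's: apply the large deviations theorem of \cite{NW} to $X_n=K_{0n}-1$, feed in the mod-Poisson convergence of Lemma~\ref{lem_generating_K0n} and Theorem~\ref{thm_mod_poisson_convergence} (noting the rate $O(1/n)$ resp.\ $O(\log n/n^{\gamma})$ is faster than any power of $1/t_n$ since $t_n\asymp\log n$), compute $\eta(z)=e^z-1$, $h=\log x$, $I(x)=x\log x-x+1$, and finish with Stirling's formula to convert $\exp(-t_nI(x))/\sqrt{2\pi t_n\eta''(h)}$ into $e^{-t_n}t_n^{k}/k!$ with $k=xt_n$. All of that matches the paper's (very condensed) argument, and your closing remarks on uniformity and on $k\to\infty$ for the Stirling step are sensible.

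There is, however, one concrete incorrect step: the rewriting of the limiting function. The shift $X_n=K_{0n}-1$ contributes a factor $e^{-z}$, so your own setup gives $\phi(z)=e^{-z}\,\Gamma(\theta)/\Gamma(\theta e^{z})$, and the functional equation $\Gamma(u+1)=u\Gamma(u)$ turns this into $\Gamma(\theta+1)/\Gamma(1+\theta e^{z})$, \emph{not} into $\Gamma(\theta)/\bigl(\Gamma(e^{z})\Gamma(\theta e^{z})\bigr)$; the latter would require $\Gamma(e^{z})=e^{z}$, which is false in general. Consequently, carrying your computation through honestly yields the leading constant $\phi(h)=\Gamma(\theta)/\bigl(x\,\Gamma(\theta x)\bigr)=\Gamma(\theta+1)/\Gamma(1+\theta x)$ rather than $\Gamma(\theta)/\bigl(\Gamma(x)\Gamma(\theta x)\bigr)$. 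The sanity check the paper itself invokes confirms this: for $\theta=1$, $K=0$ Hwang's result has the constant $1/\Gamma(1+x)=1/\bigl(x\Gamma(x)\bigr)$, which agrees with $\Gamma(\theta+1)/\Gamma(1+\theta x)$ but not with $1/\Gamma(x)^{2}$. So you should not have forced your (correct) intermediate expression into the displayed form by an invalid Gamma identity; either keep the constant $\Gamma(\theta+1)/\Gamma(1+\theta x)$ or flag the discrepancy with the statement (which mirrors the function $\phi(z)=\Gamma(\theta)/\bigl(\Gamma(e^{z})\Gamma(\theta e^{z})\bigr)$ recorded in the paper) explicitly. Apart from this point the proof is the paper's proof, spelled out in more detail.
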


\textbf{Remark:}

In fact one could obtain an arbitrary long  expansion in the above above theorem.

\section{Some examples}
\label{sec_examples}

In this section we consider some examples of sequences $\Theta$ and check if $g_\Theta(t)$ is of class $\mathcal{F}(r,\theta)$ or of class $e\mathcal{F}(r,\theta,\gamma)$.

\subsection{Simple sequences}

\subsubsection{The Ewens measure}

The simplest possible sequence is the constant sequence $\theta_m = \theta$.
This case is known as Ewens measure and is well studied, see for example \cite{barbour}.
We have
\begin{align}
g_\Theta(t) = \theta \log \left( \frac{1}{1-t} \right).
\end{align}
We thus have $g_\Theta(t)\in \mathcal{F}(1,\theta)$ and our results apply.

\subsubsection{The condition $\sum_{m=1}^\infty \frac{|\theta_m-\theta|}{m} < \infty$}
\label{sec_sum_theta_m-theta/m}
We have for $t \to 1$ and $|t|\leq 1$
\begin{align}
  g_\Theta(t)
  &=
 \sum_{m=1}^\infty \frac{\theta}{m} t^m+ \sum_{m=1}^\infty \frac{\theta_m-\theta}{m} t^m
  =
  \theta \log\left(  \frac{1}{1-t}   \right) + \sum_{m=1}^\infty \frac{\theta_m-\theta}{m}  t^m \nonumber\\
  &=
  \theta \log\left(  \frac{1}{1-t}   \right)
    +
  \sum_{m=1}^\infty \frac{\theta_m-\theta}{m}
    +
  \sum_{m=1}^\infty \frac{\theta_m-\theta}{m}  (t^m-1) \nonumber\\
&=
  \theta \log\left(  \frac{1}{1-t}   \right)
    +
  \sum_{m=1}^\infty \frac{\theta_m-\theta}{m}
    +
  o(1).
\label{eq_sum_theta_m-theta_/m}
\end{align}

It is clear that $g_\Theta(t) \in e\mathcal{F}(1,\theta,\gamma)$ if we assume $|\theta_m-\theta| = O(m^{-\gamma})$.
It is not clear from \eqref{eq_sum_theta_m-theta_/m} if $g_\Theta(t) \in \mathcal{F}(1,\theta)$ or not, even when one can extend $g_\Theta(t)$ holomorphically to some $\Delta(1,R,\phi)$.
If a holomorphic extension is available, then one can modify Theorem~\ref{thm_hwang} by replacing $O(t-1)$ by $o(1)$ in the definition of $\mathcal{F}(1,\theta)$. The only difference in the result is that one has to replace the error term $O\left(\frac{1}{n} \right)$ in \eqref{eq_thm_hwang} by $o(1)$. We do not prove this here since one only has to do some minor changes in the proof of Theorem~\ref{thm_hwang}.

\subsubsection{The condition $\sum_{m=1}^\infty |\theta_m-\theta| < \infty$}
\label{sec_sum_theta_m-theta}
We have for $t \to 1$ and $|t|\leq 1$
\begin{align}
  g_\Theta(t)
  &=
  \theta \log\left(  \frac{1}{1-t}   \right)
    +
  \sum_{m=1}^\infty \frac{\theta_m-\theta}{m}
    +
  \sum_{m=1}^\infty \frac{\theta_m-\theta}{m}  (t^m-1) \nonumber\\
  &=
  \theta \log\left(  \frac{1}{1-t}   \right)
    +
  \sum_{m=1}^\infty \frac{\theta_m-\theta}{m}
    +
  (t-1) \sum_{m=1}^\infty \frac{\theta_m-\theta}{m} \left(\sum_{k=0}^{m-1} t^k \right) \nonumber\\
&=
  \theta \log\left(  \frac{1}{1-t}   \right)
    +
  \sum_{m=1}^\infty \frac{\theta_m-\theta}{m}
  +
 O(t-1)
\label{eq_sum_theta_m-theta}
\end{align}
As before $g_\Theta(t) \in e\mathcal{F}(1,\theta,\gamma)$ if we assume $|\theta_m-\theta| = O(m^{-\gamma})$.
If $g_\Theta(t)$ can be  holomorphically extended to some $\Delta(1,R,\phi)$, then \eqref{eq_sum_theta_m-theta} shows that $g_\Theta(t) \in \mathcal{F}(1,\theta)$ (if we have an asymptotic expansion near $1$ in $\Delta(1,R,\phi)$).

\subsubsection{The sequence $\theta_m = e^{-\alpha_m}$ and $\sum_{m=1}^\infty \frac{|\alpha_m-\alpha|}{m} < \infty$ or $\sum_{m=1}^\infty |\alpha_m-\alpha| < \infty$}

Both conditions on $\alpha_m$ ensure that $|e^{-\alpha_m}-e^{-\alpha}| \leq C |\alpha_m-\alpha|$.
One can use the same argumentation as above to see that  in the first  case
\begin{align}
g_\Theta(t)
&=
  e^{-\alpha} \log\left(  \frac{1}{1-t}   \right)
    +
  \sum_{m=1}^\infty \frac{e^{-\alpha_m}-e^{-\alpha} }{m}
  +
 o(1)
\label{eq_sum_alpha_m-alpha/m}
\end{align}
and in the second case
\begin{align}
g_\Theta(t)
&=
  e^{-\alpha} \log\left(  \frac{1}{1-t}   \right)
    +
  \sum_{m=1}^\infty \frac{e^{-\alpha_m}-e^{-\alpha} }{m}
  +
 O(t-1).
\label{eq_sum_alpha_m-alpha}
\end{align}
This shows that we are in the same situation as in Section~\ref{sec_sum_theta_m-theta/m} and Section~\ref{sec_sum_theta_m-theta}.

\subsection{polylogarithm}
\label{sec_poly_logarithm}

Let $\theta_m = m^\delta$ with $\delta \neq 0$. We then have
\begin{align}
 \mathrm{Li}_{1+\delta}
 :=
 g_\Theta(t)
 =
 \sum_{m=1}^\infty \frac{1}{m^{1+\delta}} t^m.
\end{align}
The functions $\mathrm{Li}_{1+\delta}$ are known as the polylogarithm.
A simple computation shows that the convergence radius of $\mathrm{Li}_{1+\delta}$ is $1$.
It was shown by Ford in \cite{Ford} with Theorem~\ref{thm_lindelof_integral_representation} that the polylogarithm can extend holomorphically to the whole complex plane split along the axis $\R_{\geq 1}$.
The asymptotic behaviour of $\mathrm{Li}_{1+\delta}$ near $1$ can be computed with the Mellin transform.
This has been done in \cite[Section~3]{MR1678788}. We just state  here the result.\\
The case $\delta=0$ is trivial since we then have $\mathrm{Li}_{1} = -\log \left(\frac{1}{1-t} \right)$.\\
If $\delta\in\set{1,2,3,\cdots}$, then
\begin{align}
  \mathrm{Li}_{1+\delta}
  =
  \zeta(\delta+1)
  +
  O(t-1)
\label{eq_expansion_polylog_integer}
\end{align}
and for  $\delta\notin\set{0,1,2,\cdots}$
\begin{align}
  \mathrm{Li}_{1+\delta}
  =
  \Gamma(-\delta)(1-t)^{\delta}
  +
  \zeta(\delta+1)
  +
  O(t-1)
\label{eq_expansion_polylog_non_integer}
\end{align}
For $\delta >0$ we can apply Theorem~\ref{thm_hwang}, but the main term in the asymptotic expansion is $0$. 
%

%
If $\delta <0$ then $\mathrm{Li}_{1+\delta}$ is not of class $\mathcal{F}(r,\theta)$ nor of class $e\mathcal{F}(r,\theta,\gamma)$.
This shows that we cannot apply Theorem~\ref{thm_hwang} nor Theorem~\ref{thm_hwang_2}.
Ercolani and Ueltschi have shown in \cite{cycle_structure_ueltschi} that $K_{0n}$ converge for $\delta<0$ in distribution to a shifted  Poisson distribution (without re-normalization) and for $\delta>0$ they have shown $\E{K_{0n}} \approx A n^{\frac{\delta}{1+\delta}}$.
It is not yet known if $K_{0n}$ converge in distribution (after re-normalization). The method of Ercolani and Ueltschi bases also on generating functions, but they use the saddle point method to extract the asymptotic behaviour of $\nth{g}$. In this way they can have weaker assumptions on the sequence $\Theta$, but do not get information on the error term.

\subsection{ $\theta_m = \exp(c m^\theta)$}
\label{sec_exponential_growth}

A simple computation shows that the convergence radius $r$ of
$g_\Theta(t)$ is
\begin{align}
 r = \begin{cases}
      1      & \text{ if } \theta < 1,\\
      e^{-c} & \text{ if } \theta = 1,\\
      0      & \text{ if } \theta > 1 \text{ and } c>0,\\
      \infty      & \text{ if } \theta > 1 \text{ and } c<0
     \end{cases}
\end{align}
We can not apply our method for $\theta>1$ and for $\theta=1$ we have $g_\Theta(t) = -\log(1-te^c)$.
We thus restrict ourselves to $\theta <1$.
\begin{lemma}
For $\theta<1$, the function $g_\Theta(t)$ can  holomorphically extended to the whole complex plane split along the axis $\R_{\geq 1}$.
\end{lemma}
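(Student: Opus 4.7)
The plan is to apply Lindel\"of's integral representation (Theorem~\ref{thm_lindelof_integral_representation}) to a natural holomorphic interpolation of the coefficients. First I would introduce
\begin{align*}
\phi(z):= \frac{\exp(c z^{\theta})}{z}, \qquad \Re(z)>0,
\end{align*}
where $z^{\theta}=\exp(\theta\log z)$ is defined via the principal branch of the logarithm. On the right half-plane $\phi$ is clearly holomorphic and has no poles, and by construction $\phi(m)=\theta_m/m$ is exactly the $m$-th coefficient of $g_{\Theta}(t)$, so $g_{\Theta}(-t)=\sum_{m\geq 1}\phi(m)(-t)^{m}$ fits the framework of Theorem~\ref{thm_lindelof_integral_representation}.

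Next I would verify the growth condition $|\phi(z)|\leq C e^{A|z|}$ on $\Re(z)\geq \tfrac12$ for every $A\in (0,\pi)$. For $\Re(z)\geq\tfrac12$ we have $|\arg z|<\pi/2$, so $|z^{\theta}|=|z|^{\theta}$ and $\Re(z^{\theta})=|z|^{\theta}\cos(\theta\arg z)$, which is bounded in absolute value by $|z|^{\theta}$. Hence $|\exp(c z^{\theta})|\leq\exp(|c|\,|z|^{\theta})$, and since $\theta<1$ we have $|z|^{\theta}=o(|z|)$ as $|z|\to\infty$; thus for every fixed $A>0$ there is a constant $C_A$ with $|\phi(z)|\leq C_A e^{A|z|}$ on $\Re(z)\geq\tfrac12$. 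This is the step that essentially uses $\theta<1$ and is the main analytic point of the argument.

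Having these two ingredients, Theorem~\ref{thm_lindelof_integral_representation} applied with any $A\in(0,\pi)$ yields that $\widetilde g(t):=g_{\Theta}(-t)=\sum_{m\geq 1}\phi(m)(-t)^m$ extends holomorphically to the sector $|\arg t|<\pi-A$. Since $A>0$ can be chosen arbitrarily small and the extensions agree on the intersection of the sectors (they all coincide with $\widetilde g$ on $|t|<1$, the common domain of convergence of the power series), they glue to a holomorphic function on the union $\bigcup_{A>0}\{|\arg t|<\pi-A\}=\mathbb{C}\setminus(-\infty,0]$. Substituting back, $g_{\Theta}(t)=\widetilde g(-t)$ is holomorphic on $\mathbb{C}\setminus[0,\infty)$, and combining this with the fact that the power series defining $g_{\Theta}$ already converges on $|t|<1$ gives holomorphic continuation to $\mathbb{C}\setminus[1,\infty)$, as claimed.

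The only delicate point is the uniform growth bound for $\phi$; everything else is routine once one observes that the interpolation $\phi(z)=e^{cz^{\theta}}/z$ is well-defined on the right half-plane via the principal branch. No information about the behaviour of $g_{\Theta}$ near the singularity $t=1$ is extracted by this argument; that would require, for instance, a Mellin transform analysis as discussed in the remark following Theorem~\ref{thm_lindelof_integral_representation}, and will presumably be handled separately in the lemmas that follow.
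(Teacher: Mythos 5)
Your proposal is correct and follows essentially the same route as the paper: both apply Lindel\"of's integral representation (Theorem~\ref{thm_lindelof_integral_representation}) with $\phi(z)=\exp(c z^{\theta})/z$, with the key analytic point being the bound $|\phi(z)|\leq C_A e^{A|z|}$ on $\Re(z)\geq\tfrac12$ for every $A\in(0,\pi)$, which uses $\theta<1$ via $|z|^{\theta}=o(|z|)$. Your write-up is simply more explicit than the paper's about the sign convention $g(t)=\sum\phi(k)(-t)^k$ and the gluing of the sectors, which the paper leaves implicit.
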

\begin{proof}
As simple computation shows that $\left|\frac{\exp(c z^\theta)}{z} \right| \leq 2\exp(|c| |z|^\theta)$ for $\Re(z) \geq \frac{1}{2}$.
Since $\theta<1$, we can find for each $A \in ]0,\pi[$ a $C>0$ with $\left|\frac{\exp(c z^\theta)}{z} \right| \leq C \exp(A|z|)$.
This shows that we can apply Theorem~\ref{thm_lindelof_integral_representation} with $\phi = \frac{\exp(c z^\theta)}{z}$.
\end{proof}
We now determine the asymptotic behaviour near $1$.\\
\underline{Case $\theta<0$}\\
We have
\begin{align}
 g_\Theta(t)
 =
 \sum_{m=1}^\infty \exp(c m^\theta) \frac{t^m}{m}
 =
 \sum_{k=0}^\infty \sum_{m=1}^\infty \frac{ c^k}{k!} m^{k\theta-1} t^m
 =
 \sum_{k=0}^\infty \frac{ c^k}{k!} \mathrm{Li}_{1-k\theta}(t).
\label{eq_super_exp_theta<0_asymp}
\end{align}
We know the expansion of each summand near $1$, but we have to justify that we plug them in.
We first look at the case $|t| \leq 1$. We use the same argumentation as in \eqref{eq_expansion_polylog_integer} to see that
\begin{align}
 \sum_{m=1}^\infty \frac{1}{m^{1-k\theta}} t^m
 &=
 \sum_{m=1}^\infty \frac{1}{m^{1-k\theta}}   + (t-1)\sum_{m=1}^\infty \frac{1}{m^{1-k\theta}} \left( \sum_{k=1}^m t^k   \right)\nonumber\\
 &=
\sum_{m=1}^\infty \frac{1}{m^{1-k\theta}}   + O\left( (t-1) \sum_{m=1}^\infty \frac{1}{m^{2}}  \right)
\end{align}
with $O(.)$ uniform for $-k\theta \geq 2$.
We thus can this put into \eqref{eq_super_exp_theta<0_asymp} and get for $|t|<1$
\begin{align}
 g_\Theta(t)
 =
\log \left( \frac{1}{1-t}  \right) + K + O(t-1) \text{ for } t\to 1, |t|<1
\end{align}
One  can now use the same argumentation as in \cite[Section~3]{MR1678788} with the Mellin transformation to see that the expansion is also valid for $t$ in some $\Delta_0$.\\


\underline{Case $c<0, 0<\theta<1$}\\
In this case it is easy to see that $g_\Theta(t) = K + O(1-t)$ for $t\to 1$. 
We thus cannot apply Theorem~\ref{thm_hwang}  nor Theorem~\ref{thm_hwang_2}. This case was also considered by 
Ercolani and Ueltschi. They have shown that $K_{0n}$  converge as in polylogarthm case in distribution to a shifted Poisson distribution.

\underline{Case $c>0, 0<\theta<1$}\\

The behaviour of $K_{0n}$ in this case is an open question.

\section{The generalized weighted measure}
\label{sec_more_general_measures}

In this section we introduce the generalized weighted measure. This is defined as
\begin{definition}
   Let $F_m: \N \to \R_{>0}$  be given for $m\geq 1$ with $F_m(0)=1$. We then define
    \begin{align}
      \PF{\sigma}
      :=
      \frac{1}{n! h_n(F)} \prod_{m=1}^n F_m \bigl(C_m(\sigma) \bigr)
    \end{align}
    with $h_n(F)$ a normalization constant.
\end{definition}
It follows immediately from the definition that $\PT{.} = \PF{.}$ with $F_m(k) = \theta_m^k$.
Our approach has been   based so far on generating functions,  and more especially on Lemma~\ref{lem_cycle_index_theorem}.
It is obvious that Lemma~\ref{lem_cycle_index_theorem} cannot be used anymore for general functions $F_m$,
but we can prove a more general version of it:

\begin{lemma}
\label{lem_cycle_index_theorem_general}
 Let $A_m: \N \to \C$ for $m\geq 1$ be given with $A_m(0)=1$ for $m\geq 1$. We then have as formal power series

\begin{align}
 \sla t^{|\la|} \prod_{m=1}^{|\la|} \bigl(A_m(C_m)  \bigr)
=
\prod_{m=1}^\infty \mathrm{EG}\left(A_m,\frac{t^m}{m} \right)
\text{ with }
\mathrm{EG}(A,t):= \sum_{k=0}^\infty \frac{A(k)}{k!} t^k.
\label{eq_general_cycle_index_theorem}
\end{align}
with $C_m = C_m(\la)$  as in Definition~\ref{def_Cm_and_K0n}
%
\end{lemma}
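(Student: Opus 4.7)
The plan is to prove Lemma 6.1 by reindexing the sum over partitions as a sum over the multiplicity sequences $(c_m)_{m\geq 1}$ and then factoring the resulting sum into an infinite product. This parallels the classical proof of Lemma~\ref{lem_cycle_index_theorem}, but with the exponential series replaced by the more flexible generating series $\mathrm{EG}(A_m,\cdot)$.

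First, I would recall that a partition $\lambda$ is uniquely determined by its sequence of multiplicities $(c_1,c_2,\ldots)$, where $c_m = C_m(\lambda)\in\N$ and only finitely many $c_m$ are nonzero. Under this correspondence,
\[
|\lambda| \;=\; \sum_{m\geq 1} m\,c_m, \qquad z_\lambda \;=\; \prod_{m\geq 1} m^{c_m}\,c_m!,
\]
so the summand on the left of \eqref{eq_general_cycle_index_theorem} factors as
\[
\frac{t^{|\lambda|}}{z_\lambda}\prod_{m=1}^{|\lambda|}A_m(C_m)
\;=\;
\prod_{m\geq 1}\frac{A_m(c_m)}{c_m!}\left(\frac{t^m}{m}\right)^{c_m},
\]
using the assumption $A_m(0)=1$ to interpret the product over all $m\geq 1$ (only finitely many factors differ from~$1$).

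Next, I would interchange the sum over finitely supported sequences $(c_m)_{m\geq 1}$ with the product over~$m$. Concretely,
\[
\sum_{(c_m)_{m\geq 1}}\prod_{m\geq 1}\frac{A_m(c_m)}{c_m!}\left(\frac{t^m}{m}\right)^{c_m}
\;=\;
\prod_{m\geq 1}\sum_{k=0}^{\infty}\frac{A_m(k)}{k!}\left(\frac{t^m}{m}\right)^{k}
\;=\;
\prod_{m\geq 1}\mathrm{EG}\!\left(A_m,\frac{t^m}{m}\right),
\]
which is the right-hand side of \eqref{eq_general_cycle_index_theorem}.

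The only delicate point—and the step I expect to be the main (small) obstacle—is justifying the exchange of sum and product at the level of formal power series, since both are a priori infinite. The key observation is that each factor satisfies $\mathrm{EG}(A_m,t^m/m) = 1 + O(t^m)$ because $A_m(0)=1$, so the infinite product is a well-defined formal power series: for any fixed $N$, the coefficient of $t^N$ involves contributions only from indices $m\leq N$ and from finitely many choices of $(c_1,\ldots,c_N)$ with $\sum m c_m = N$. Comparing coefficients of $t^N$ on both sides yields the identical finite sum, which establishes the equality in $\C[[t]]$. This finishes the proof, and no analytic convergence issues arise because the statement is purely formal.
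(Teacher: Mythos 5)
Your proof is correct and follows essentially the same route as the paper: reindexing partitions by their multiplicity sequences $(c_m)$, using $z_\lambda=\prod_m m^{c_m}c_m!$ to factor the summand, and then exchanging the sum over finitely supported sequences with the product over $m$ to obtain $\prod_m \mathrm{EG}(A_m,t^m/m)$. Your extra remark justifying the formal-power-series interchange by comparing coefficients of $t^N$ is a sound (and slightly more careful) version of the step the paper carries out implicitly.
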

\begin{proof}
 We have
\begin{align}
 \slan t^{n}\prod_{m=1}^n \bigl( A_m(C_m) \bigr)
&=
\sum_{\substack{c_1,\cdots,c_n \in\N \\ \sum_{m=1}^n m c_m=n}}
\frac{\prod_{m=1}^n A_m(c_m)t^{mc_m}}{\prod_{m=1}^n c_m! m^{c_m}} \nonumber\\
&=
\sum_{\substack{c_1,\cdots,c_n \in\N \\ \sum_{m=1}^n m c_m=n}}
\prod_{m=1}^n \frac{A_m(c_m)}{c_m!} \left( \frac{t^m}{m} \right)^{c_m}.
\end{align}
We get

\begin{align}
 \sla t^{|\la|}\prod_{m=1}^{|\la|} A_m(C_m)
&=
\sum_{\substack{(c_m)_{m=1}^\infty,\\ \sum_{m=1}^\infty m c_m < \infty }}
\prod_{m=1}^{|\la|} \frac{A_m(c_m)}{c_m!} \left( \frac{t^m}{m} \right)^{c_m}
=
\prod_{m=1}^\infty \left( \sum_{k=0}^\infty \frac{A_m(k)}{k!} \left( \frac{t^m}{m} \right)^{k}   \right)\nonumber\\
&=
\prod_{m=1}^\infty \mathrm{EG}\left(A_m,\frac{t^m}{m} \right)
\end{align}
by definition of $\mathrm{EG}(F,t)$.
\end{proof}
As before we can write down the generating functions for the cycle numbers and for $K_{0n}$:

\begin{theorem}
\label{thm_generating_allg}
We have
\begin{align}
   \sum_{n=0}^\infty t^n h_n(F)
   =
   \prod_{m=1}^\infty EG \left(F_m, \frac{t^m}{m} \right).
\label{eq_generating_hn_allg}
 \end{align}
If $b\in\N$ and $s_1,\cdots,s_b \in\R$ are fixed, then
 \begin{align}
   \sum_{n=0}^\infty t^n h_n(F) \E{ \exp\left( i \sum_{m=1}^b s_m C^{(n)}_m \right)  }
   =
   \prod_{m=1}^b EG \left(F_m, e^{is_m} \frac{t^m}{m} \right)
   \prod_{m=1}^b EG \left(F_m, \frac{t^m}{m} \right)
\label{eq_generating_cycles_allg}
 \end{align}
and for each $w\in\C$
 \begin{align}
   \sum_{n=0}^\infty t^n h_n(F) \E{ \exp\left( w K_{0n} \right)  }
   =
   \prod_{m=1}^\infty EG\left(F_m, w \frac{t^m}{m} \right)
\label{eq_generating_cycles_tot_allg}
 \end{align}
\end{theorem}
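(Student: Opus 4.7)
The plan is to derive all three identities as direct applications of Lemma~\ref{lem_cycle_index_theorem_general}, absorbing the relevant Fourier and exponential factors into the coefficient functions $A_m$ at each step. The common starting point is Lemma~\ref{lem_size_of_conj_classes}: since $\prod_m F_m(C_m(\sigma))$ depends only on the cycle type of $\sigma$, rewriting the average over $S_n$ as a sum over partitions of $n$ gives
\begin{align*}
h_n(F) = \sum_{\la \vdash n} \frac{1}{z_\la} \prod_{m=1}^{l(\la)} F_m(C_m(\la)),
\end{align*}
and analogous expressions for $h_n(F)\E{\cdots}$ once the extra multiplicative factors are folded into the product.

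For \eqref{eq_generating_hn_allg} I would multiply the identity above by $t^n$, sum over $n\ge 0$, and swap the order of summation to obtain $\sla t^{|\la|}\prod_m F_m(C_m)$. Lemma~\ref{lem_cycle_index_theorem_general} applied with $A_m := F_m$---whose required normalization $A_m(0)=1$ is exactly the hypothesis on $F_m$---immediately produces $\prod_{m=1}^\infty \mathrm{EG}(F_m, t^m/m)$.

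For \eqref{eq_generating_cycles_allg} the same manipulation gives
\begin{align*}
\sum_{n=0}^\infty t^n h_n(F)\, \E{\exp\Bigl(i\sum_{m=1}^b s_m C_m^{(n)}\Bigr)} = \sla t^{|\la|}\prod_m A_m(C_m),
\end{align*}
where
\begin{align*}
A_m(c) := \begin{cases} F_m(c)\, e^{i s_m c}, & 1\le m\le b,\\ F_m(c), & m>b. \end{cases}
\end{align*}
Each $A_m$ still satisfies $A_m(0)=1$, and from $\mathrm{EG}(A, u)=\sum_k A(k) u^k/k!$ one reads off $\mathrm{EG}(A_m, u)=\mathrm{EG}(F_m, e^{is_m} u)$ for $m\le b$ and $\mathrm{EG}(A_m, u)=\mathrm{EG}(F_m, u)$ for $m>b$. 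Lemma~\ref{lem_cycle_index_theorem_general} then yields $\prod_{m=1}^b \mathrm{EG}(F_m, e^{is_m} t^m/m)\prod_{m=b+1}^\infty \mathrm{EG}(F_m, t^m/m)$, which is the content of \eqref{eq_generating_cycles_allg}. Equation \eqref{eq_generating_cycles_tot_allg} follows by the same strategy with $A_m(c):=F_m(c)\, w^c$, for which $\mathrm{EG}(A_m, u)=\mathrm{EG}(F_m, wu)$.

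There is no genuine obstacle here: the proof amounts to combinatorial bookkeeping built on top of Lemma~\ref{lem_cycle_index_theorem_general}, and the only verification required at each step is the normalization $A_m(0)=1$, which is preserved because each inserted factor $e^{is_m c}$ or $w^c$ equals $1$ when $c=0$.
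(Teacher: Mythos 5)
Your proof is correct and follows essentially the same route as the paper: rewrite the weighted average as a sum over partitions via Lemma~\ref{lem_size_of_conj_classes}, then apply Lemma~\ref{lem_cycle_index_theorem_general} with coefficients $A_m$ obtained by folding the extra multiplicative factors into $F_m$, checking $A_m(0)=1$. The only cosmetic difference is that the paper deduces \eqref{eq_generating_hn_allg} from \eqref{eq_generating_cycles_allg} by setting $s_1=\cdots=s_b=0$, while you prove it directly with $A_m=F_m$; both are fine. You also correctly arrive at $\prod_{m=b+1}^{\infty} EG\left(F_m,\frac{t^m}{m}\right)$ for the second factor in \eqref{eq_generating_cycles_allg}, the printed range $\prod_{m=1}^{b}$ being a typo in the statement.

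One point to make explicit in the last identity: with the left-hand side written as $\E{\exp(wK_{0n})}=\E{\prod_m (e^{w})^{C_m}}$, the substitution should be $A_m(c)=F_m(c)\,e^{wc}$, which yields $\prod_{m=1}^{\infty} EG\left(F_m, e^{w}\frac{t^m}{m}\right)$; this is the version consistent with Lemma~\ref{lem_generating_K0n}, whose right-hand side is $\exp\bigl(e^{w} g_\Theta(t)\bigr)$. Your choice $A_m(c)=F_m(c)\,w^{c}$ instead proves the identity for $\E{w^{K_{0n}}}$, which matches the right-hand side of \eqref{eq_generating_cycles_tot_allg} as printed. The statement itself is internally inconsistent on this $w$ versus $e^{w}$ point, so your reading is defensible, but you should say explicitly which convention you adopt (or note that the two versions are related by $w\mapsto e^{w}$).
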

\begin{proof}
The identity \eqref{eq_generating_hn_allg} follows from \eqref{eq_generating_cycles_allg} by choosing $s_1=\cdots = s_b=0$.
It is thus enough to prove \eqref{eq_generating_cycles_allg}. We have

\begin{align}
  \sum_{n=0}^\infty
   h_n(F) \EF{\exp \left(i\sum_{m=1}^b s_m C_m^{(n)} \right)} t^n
   &=
  \sla  t^{|\la|} \exp \left(i\sum_{m=1}^b s_m C_m^{(n)} \right)
  \prod_{m=1}^n F_m (C_m)\nonumber\\
  &=
  \sla  t^{|\la|} \left(\prod_{m=1}^b  (e^{is_m})^{ C_m^{(n)}} F_m(C_m^{(n)})\right)
  \left(\prod_{m=b+1}^{l(\la)} F_m(C_m^{(n)}) \right)
\end{align}
We now use Lemma~\ref{lem_cycle_index_theorem_general} with
\begin{align}
  A_m(k) = \left\{
             \begin{array}{ll}
                (e^{is_m})^k F_m(k), & \hbox{if } 1 \leq m \leq b ; \\
               F_m(k), & \hbox{if } m> b.
             \end{array}
           \right.
\end{align}
A simple computation then shows that
\begin{align}
EG\left(A_m, \frac{t^m}{m} \right)
=
EG\left(F_m, e^{is_m} \frac{t^m}{m} \right)
\end{align}
for $1 \leq m \leq b$.
This proves \eqref{eq_generating_cycles_allg}. The proof of  \eqref{eq_generating_cycles_tot_allg} is similar.
\end{proof}

We  have thus found the generating functions in this general setting. To get the asymptotic behaviour as in Section~\ref{sec_central_limit_tot_number_cycles} and Section~\ref{sec_central_limit_number_cycles}, we need some analyticity assumptions. We now give here some examples

%
%
%
%
%
%
%
%
%
%
%
%
%
%
%
\subsection{exp-polynomial weights}
\label{sec_apperance of polylogartihms}

Suppose that a polynomial $P(t) = \theta t+ \sum_{k=2}^d b_k t^k$ is given with $b_k \geq 0$ and $\theta>0$. We then define $F_m$ implicit by the equation
\begin{align}
 EG(F_m,t) = \exp( P(t)).
\label{eq_def_F_polylogarithm}
\end{align}
It is easy to see from \eqref{eq_def_F_polylogarithm} that $F_m(0) =1$ and $F_m(c) \geq 0$.
The functions $F_m$ thus generate a probability measure on $S_n$. We do not need an explicit expression for $F_m$ since we are only interested in the asymptotic behaviour of $C_m$ and $K_{0n}$.
We have for $|t|<1$:
\begin{align}
   \sum_{n=0}^\infty t^n h_n
   &=
   \prod_{m=1}^\infty \exp\left(P\left( \frac{t^m}{m} \right)\right)
   =
   \exp\left(\theta \left(\sum_{m=1}^\infty \frac{t^m}{m} \right)
   +
   \sum_{k=2}^d b_k \sum_{m=1}^\infty \left(\frac{t^m}{m} \right)^k \right)\nonumber\\
   &=
   \exp\left( -\theta \log(1-t) + \sum_{k=2}^d b_k \mathrm{Li}_k(t^k) \right)
\end{align}
with $\mathrm{Li}_k(t) :=  \sum_{m=1}^\infty \frac{t^m}{m^k}$.
%
%
We get that $-\theta \log(1-t) + \sum_{m=2}^d \mathrm{Li}_k(t^k)$ is of class $\mathcal{F}(\theta,1)$ and we can apply
Theorem~\ref{thm_hwang}. It is now obvious that we get in this situation the same asymptotic behaviour as for the Ewens measure.
\subsection{spatial random permutations}
\label{sec_spatial_random_permutations}

The measure in this section comes from physics and arises from a model for the Bose gas and has a connection to
Bose-Einstein condensation. We give here only the definition and a very brief idea of the model behind, but avoiding further details. We refer to \cite{spatial_random_permutations} for a more comprehensive overview.

We follow here the physicists notation and write $\theta_m = e^{-\alpha_m}$.
Let $\xi:\R^d \to \R$ be a continuous function and $\Lambda$ be a lattice in $\R^d$ be given.
Assume that for all $k\in\Lambda$.
\begin{align}
 0 \leq  e^{-\eps(k)}:= \int_{\R^d} e^{-\xi(x)} e^{-2\pi i <k,x>}\ dx
\ \text{ and }\
\sum_{k\in \Lambda } e^{-\eps(k) } < \infty.
\end{align}
We then define
\begin{align}
F_m(c)
=
F_m^{(\alpha,\xi)}(c)
=
\left(e^{-\alpha_m} \sum_{k\in \Lambda } e^{-\eps(k) m} \right)^c.
\end{align}
The functions $F_m^{(\alpha,\xi)}$ are well defined since there exists only finitely many $k\in \Lambda$ with $e^{-\eps(k) m} > 1$.
As before, we use $h_n=h_n^{(\alpha,\xi)}$ for the normalization constant.

We now describe the physical model behind this. Let $D$ be a fundamental domain of $\R^d /\Lambda$ and $x_1,\cdots,x_n$ be $n$ particles in $D$. The function $\xi$ plays the role to penalize certain configurations of the $x_i$. More precise, the probability of a given configuration is defined as
\begin{align}
\mathbb{P}_{n,dx}\left[\sigma, dx \right]
=
\frac{1}{h_n n!} \exp\left(-\sum_{m=1}^n \xi\bigl( x_m -x_{\sigma(m)}   \bigr) - \sum_{m=1}^n C_m \alpha_m \right).
\label{eq_def_spatial_phys}
\end{align} 
where $dx$ is the normalized Lebesgue measure on $D$. 
The probability measure $\mathbb{P}_{n,dx} [.]$ now induces a probability measure $\mathbb{P}_{n}[.]$ on $S_n$ (by averaging over $D$).
It is not obvious, but it can be proven that $\mathbb{P}_{n}[.]$ and $\PF{.}$ are the same measures, see \cite[Proposition~3.1]{spatial_random_permutations}.\\

We now compute the asymptotic behaviour of $C_m$ and $K_{0n}$ with respect to this measure.
The lattice in \cite{spatial_random_permutations} depends on $n$ and is so chosen that the density $\rho := \frac{|D|}{n}$ is fix. We assume here that the lattice $\Lambda$ is independent of $n$.
The reason why we do this that we need in our approach that the weights are independent of $n$.
We define
\begin{align}
  g^{(\alpha)}(t)
  &:=
  \sum_{m=1}^\infty \frac{e^{-\alpha_m}}{m} t^m
  \label{eq_def_gen_hn_with_alpha}\\
  g^{(\alpha,\xi)}(t)
  &:=
  \sum_{k\in \Lambda^d}  g^{(\alpha)}(e^{-\eps(k)}t)
  =
  \sum_{k\in \Lambda} \sum_{m=1}^\infty  \frac{e^{-\alpha_m}}{m} t^m  (e^{-\eps(k)})^m
\label{eq_def_gen_hn_with_alpha_xi}
\end{align}
The functions $g^{(\alpha)}(t)$ and $g^{(\alpha,\xi)}(t)$ are both formal power series in $t$ since by assumption $e^{-\eps(k)} \geq 0$ and the coefficient of each $t^m$ is finite.

The function $g^{(\alpha)}(t)$ agrees with $g_\Theta(t)$ for $\Theta = \left( e^{-\alpha_m}\right)_{m\in\N}$.
The difference to the Section~\ref{sec_central_limit_number_cycles} and~\ref{sec_central_limit_tot_number_cycles} is that
$g^{(\alpha,\xi)}(t)$ will play role of $g_\Theta(t)$ and not $g^{(\alpha)}(t)$, i.e. the asymptotic behaviour of $C_m$ and $K_{0m}$ depends directly on the analytic properties of $g^{(\alpha,\xi)}$ (see below).\\
We use the same argumentation as in the previous sections to get the asymptotic behaviour of $C_m$ and $K_{0n}$.
We begin with the generating functions:
\begin{theorem}
\label{thm_gen_for_alpha_xi}
Assume that $\sum_{k\in \Lambda} e^{-\eps(k) }$ is convergent.
We then have the following identities (as formal power series)
\begin{align}
G^{(\alpha,\xi)} (t)
:=
\sum_{n=0}^\infty t^n h_n^{(\alpha,\xi)}
&=
\exp \left(  g^{(\alpha,\xi)}(t) \right)
\label{eq_hn_gen_ueltschi}\\
\sum_{n=0}^\infty t^n h_n^{(\alpha,\xi)} \E{e^{\left(i\sum_{m=1}^b s_m C_m^{(n)}\right)}}
 &=
 G^{(\alpha,\xi)} (t)
 \prod_{m=1}^b
 \exp \left((e^{is}-1) \frac{t^m}{m}  \left(e^{-\alpha_m} \sum_{k\in \Lambda} e^{-\eps(k) m} \right)\right)
\label{eq_Cm_gen_ueltschi}\\
\sum_{n=0}^\infty h_n^{(\alpha,\xi)} \E{\exp\bigl( w K_{0n}  \bigr)} t^n
 &=
\exp \left(w   g^{(\alpha,\xi)}(t) \right).
\label{eq_K0n_gen_ueltschi}
\end{align}
\end{theorem}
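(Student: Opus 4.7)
The key observation is that the weight functions have a pure product structure: writing $a_m := e^{-\alpha_m}\sum_{k\in\Lambda} e^{-\eps(k) m}$, one has $F_m(c) = a_m^c$ for all $c \geq 0$. Consequently the exponential generating function of $F_m$ factorizes trivially,
\begin{align*}
EG(F_m, u) = \sum_{k=0}^\infty \frac{a_m^k}{k!} u^k = \exp(a_m\, u).
\end{align*}
Thus although the $F_m$ are \emph{a priori} arbitrary, the spatial random permutation model is structurally just the ordinary $\PT{.}$-model with $\theta_m := a_m$ (and hence could alternatively be handled by Lemma~\ref{lem_cycle_index_theorem}). The plan is therefore to plug this identity into the three generating function formulas from Theorem~\ref{thm_generating_allg} and simplify.

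For \eqref{eq_hn_gen_ueltschi}: Theorem~\ref{thm_generating_allg} gives
\begin{align*}
\sum_{n\geq 0} t^n h_n^{(\alpha,\xi)} = \prod_{m=1}^\infty EG\!\left(F_m, \tfrac{t^m}{m}\right) = \prod_{m=1}^\infty \exp\!\left(a_m \tfrac{t^m}{m}\right) = \exp\!\left(\sum_{m=1}^\infty a_m \tfrac{t^m}{m}\right),
\end{align*}
and the exponent is precisely $g^{(\alpha,\xi)}(t)$ after inserting the definition of $a_m$ and interchanging the sums over $m$ and $k$. For \eqref{eq_Cm_gen_ueltschi}, apply Theorem~\ref{thm_generating_allg} and factor each of the first $b$ factors as
\begin{align*}
EG\!\left(F_m, e^{is_m}\tfrac{t^m}{m}\right) = \exp\!\left(a_m e^{is_m} \tfrac{t^m}{m}\right) = EG\!\left(F_m, \tfrac{t^m}{m}\right)\cdot \exp\!\left((e^{is_m}-1) a_m \tfrac{t^m}{m}\right),
\end{align*}
multiply by the remaining factors $\prod_{m>b} EG(F_m, t^m/m)$, and recombine into $G^{(\alpha,\xi)}(t)$ using \eqref{eq_hn_gen_ueltschi} already proven. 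Identity \eqref{eq_K0n_gen_ueltschi} is the same computation as \eqref{eq_hn_gen_ueltschi} with $t^m/m$ replaced by $w\, t^m/m$, giving directly $\exp(w\, g^{(\alpha,\xi)}(t))$.

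The only non-routine point, and the place where the hypothesis $\sum_{k\in\Lambda} e^{-\eps(k)}<\infty$ enters, is verifying that $g^{(\alpha,\xi)}(t)$ is a well-defined formal power series: its coefficient of $t^n$ is $\frac{e^{-\alpha_n}}{n}\sum_{k\in\Lambda} e^{-\eps(k) n}$, and one must check this is finite for every $n\geq 1$. The summability assumption forces $e^{-\eps(k)}\to 0$, so only finitely many $k\in\Lambda$ have $e^{-\eps(k)}\geq 1$; for the remaining $k$ one has $e^{-\eps(k) n}\leq e^{-\eps(k)}$, making the series convergent. Once each coefficient is shown finite, all the formal-power-series manipulations above — exponentiating, multiplying factor by factor, interchanging the $m$- and $k$-summations inside the exponent — are legitimate in $\C[[t]]$, and the three identities follow. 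This is essentially a bookkeeping step rather than a genuine obstacle; the real content of the theorem lies already in Theorem~\ref{thm_generating_allg} and its proof via Lemma~\ref{lem_cycle_index_theorem_general}.
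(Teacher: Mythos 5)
Your proof is correct and follows essentially the same route as the paper: compute $EG(F_m,u)=\exp(a_m u)$ from the pure power structure $F_m(c)=a_m^c$, plug into Theorem~\ref{thm_generating_allg}, and interchange the $m$- and $k$-sums to identify the exponent with $g^{(\alpha,\xi)}(t)$. Your extra remarks (that the model is the $\PT{.}$-model with $\theta_m=a_m$, and the check that each coefficient of $g^{(\alpha,\xi)}$ is finite under $\sum_{k\in\Lambda}e^{-\eps(k)}<\infty$) only make explicit what the paper treats implicitly.
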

\begin{proof}
We begin with \eqref{eq_hn_gen_ueltschi}. We have
\begin{align}
\mathrm{EG}\left(F_m,\frac{t^m}{m}\right)
&=
\sum_{c=0}^\infty \frac{F_m(c)}{c!} \left( \frac{t^m}{m}\right)^c
=
\sum_{c=0}^\infty \frac{1} {c!} \left( \frac{t^m}{m} e^{-\alpha_m} \sum_{k\in \Lambda } e^{-\eps(k) m} \right)^c\\
&=
\exp \left(\frac{t^m}{m}  \left(e^{-\alpha_m} \sum_{k\in \Lambda} e^{-\eps(k) m} \right)       \right).\nonumber
\end{align}
Thus
\begin{align}
\prod_{m=1}^\infty \mathrm{EG}\left(F_m,\frac{t^m}{m} \right)
&=
\exp \left(\sum_{m=1}^\infty \frac{t^m}{m}  \left(e^{-\alpha_m} \sum_{k\in \Lambda} e^{-\eps(k) m} \right)       \right)
\label{eq_gen_hn_allg_comp}\\
&=
\exp \left(\sum_{ k\in \Lambda}  \sum_{m=1}^\infty e^{-\alpha_m} \frac{(e^{-\eps(k)}t)^m}{m}        \right)
=
\exp \left(\sum_{k\in \Lambda}  g^{\alpha}(e^{-\eps(k)}t) \right)\nonumber
\end{align}
The above expressions are formal power series since $g^{(\alpha,\xi)}(t)$ is a formal power series.
This proves \eqref{eq_hn_gen_ueltschi}.
We have
\begin{align}
 \mathrm{EG}\left(F_m,w\frac{t^m}{m}\right)
=
\exp \left(w \frac{t^m}{m}  \left(e^{-\alpha_m} \sum_{k\in \Lambda} e^{-\eps(k) m} \right)       \right)
\end{align}
The identities \eqref{eq_Cm_gen_ueltschi} and \eqref{eq_K0n_gen_ueltschi} thus follow immediately from Theorem~\ref{thm_generating_allg}.
\end{proof}

The next step is to obtain the asymptotic behaviour of $C_m$ and $K_{0n}$.
For this we have to assume some analytic properties.
In this context, it is natural to assume that $g^{(\alpha)}$ is of class $\mathcal{F}(\alpha,r)$.
\begin{lemma}
\label{lem_g_alpha_xi_analytic}
Let $g^{(\alpha)}$ be of class $\mathcal{F}(\alpha,r)$.
We define
\begin{align}
\widetilde{r}
:=
\min\set{e^{\eps(k)}, k\in\Lambda}
\text{ and }
A:=\#\set{k\in \Lambda, e^{\eps(k)} = \widetilde{r} }.
\end{align}
Then $g^{(\alpha,\xi)} (t)$ is of class  $\mathcal{F}(A\alpha,\widetilde{r} r)$.
\end{lemma}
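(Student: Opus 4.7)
The plan is to decompose
\begin{align*}
g^{(\alpha,\xi)}(t) = \sum_{k\in\Lambda} g^{(\alpha)}(e^{-\epsilon(k)}t)
\end{align*}
into a singular contribution plus a holomorphic remainder, separating the indices $k$ with $e^{\epsilon(k)}=\widetilde{r}$ from those with $e^{\epsilon(k)}>\widetilde{r}$. The first set has $A$ elements, and each contributes the identical function $g^{(\alpha)}(t/\widetilde{r})$, so its total contribution is $A\cdot g^{(\alpha)}(t/\widetilde{r})$. Since $g^{(\alpha)}$ is of class $\mathcal{F}(\alpha,r)$, the rescaling $t\mapsto t/\widetilde{r}$ moves the singularity from $r$ to $\widetilde{r}r$ and leaves the leading coefficient $\alpha$ intact; multiplying by $A$ then yields the singular behaviour $A\alpha\log\bigl(1/(1-t/(\widetilde{r}r))\bigr)+AK+O(t-\widetilde{r}r)$ near $t=\widetilde{r}r$, together with holomorphy in a scaled Hankel domain $\Delta_0(\widetilde{r}r,\widetilde{r}R,\phi)$.

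The remaining task is to verify that the tail
\begin{align*}
R(t):=\sum_{\substack{k\in\Lambda \\ e^{\epsilon(k)}>\widetilde{r}}} g^{(\alpha)}(e^{-\epsilon(k)}t)
\end{align*}
extends to a function holomorphic in a neighbourhood of $t=\widetilde{r}r$. The hypothesis $\sum_k e^{-\epsilon(k)}<\infty$ forces $e^{\epsilon(k)}\to\infty$, so the set $\{e^{\epsilon(k)}:e^{\epsilon(k)}>\widetilde{r}\}$ admits a strict infimum $\widetilde{r}_2>\widetilde{r}$ that is in fact attained. I would then fix any $R'$ with $\widetilde{r}r<R'<\widetilde{r}_2 r$: for every $k$ with $e^{\epsilon(k)}>\widetilde{r}$ and every $|t|\leq R'$, one has $|e^{-\epsilon(k)}t|\leq R'/\widetilde{r}_2<r$, so each summand is holomorphic there.

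Uniform convergence of the tail then follows from the fact that $g^{(\alpha)}(0)=0$: on any disk $|z|\leq\rho<r$ the quotient $g^{(\alpha)}(z)/z$ is holomorphic, hence bounded by some constant $M$, so $|g^{(\alpha)}(z)|\leq M|z|$. Consequently
\begin{align*}
|g^{(\alpha)}(e^{-\epsilon(k)}t)|\leq MR'\,e^{-\epsilon(k)} \qquad (|t|\leq R',\ e^{\epsilon(k)}>\widetilde{r}),
\end{align*}
and the tail is dominated by $MR'\sum_k e^{-\epsilon(k)}<\infty$. Weierstrass's theorem makes $R$ holomorphic on $|t|<R'$, and since $\widetilde{r}r<R'$ the Taylor expansion of $R$ at $\widetilde{r}r$ supplies $R(t)=R(\widetilde{r}r)+O(t-\widetilde{r}r)$. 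Adding the two pieces yields
\begin{align*}
g^{(\alpha,\xi)}(t) = A\alpha\log\!\left(\frac{1}{1-t/(\widetilde{r}r)}\right) + \bigl(AK+R(\widetilde{r}r)\bigr) + O(t-\widetilde{r}r),
\end{align*}
holomorphic in the intersection $\Delta_0(\widetilde{r}r,\min(\widetilde{r}R,R'),\phi)$, which is exactly membership in $\mathcal{F}(A\alpha,\widetilde{r}r)$.

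\textbf{Main obstacle.} The subtle step is producing a single Hankel domain on which the tail $R(t)$ converges uniformly: one must both clear the other singularities $re^{\epsilon(k)}$ (handled by the strict gap $\widetilde{r}_2>\widetilde{r}$, which exists precisely because $\sum e^{-\epsilon(k)}<\infty$) and secure a summable $k$-uniform bound on the summands (handled by the vanishing $g^{(\alpha)}(0)=0$, which upgrades the trivial sup bound to the linear bound $M|z|$ that pairs with $\sum e^{-\epsilon(k)}$). Once these two ingredients are combined, the verification of both clauses in the definition of $\mathcal{F}$ is immediate.
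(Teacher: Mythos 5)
Your proposal is correct, and its analytic core coincides with the paper's: both arguments rest on the linear bound $|g^{(\alpha)}(z)|\leq C|z|$ near $0$ (valid because $g^{(\alpha)}(0)=0$) paired with the summability of $\sum_{k}e^{-\epsilon(k)}$ to control the infinite sum. Where you differ is in the organization of the singularity analysis: the paper proves holomorphy of the \emph{entire} sum on the scaled domain $\widetilde{r}\,\Delta_0(r,R,\phi)$ (splitting off, for each fixed $t$, the finitely many $k$ with $|t|e^{-\epsilon(k)}>r/2$) and then dismisses the local expansion at $t=\widetilde{r}r$ with ``the other statements are clear,'' whereas you isolate the $A$ minimal terms, which give $A\,g^{(\alpha)}(t/\widetilde{r})$ and hence the $A\alpha\log$ singularity with constant $AK$, and prove that the tail is holomorphic in a full disc of radius $R'$ with $\widetilde{r}r<R'<\widetilde{r}_2 r$, using the spectral gap $\widetilde{r}_2>\widetilde{r}$ (which indeed exists, and is attained, because summability forces $e^{\epsilon(k)}\to\infty$). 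This buys you exactly the detail the paper leaves implicit — a verified $K+O(t-\widetilde{r}r)$ expansion, with $K=AK_{g^{(\alpha)}}+R(\widetilde{r}r)$ — at the harmless cost of a Hankel domain with outer radius $\min(\widetilde{r}R,R')$ instead of $\widetilde{r}R$; since the definition of $\mathcal{F}(\,\cdot\,,\cdot\,)$ only asks for \emph{some} such domain, this is fully adequate, and your write-up is in fact more complete than the paper's proof.
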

\begin{proof}
We have by assumption that $g^{(\alpha)}$ is holomorphic in $\Delta_0(r,R,\phi)$ for some $\phi,R$.
Thus $g^{(\alpha)}(e^{-\eps(k)}t)$ is holomorphic in $e^{\eps(k)} \Delta_0(r,R,\phi)$.
This shows that $g^{(\alpha,\xi)}(t)$ is a sum of functions holomorphic in $\widetilde{r} \Delta_0(r,R,\phi)$.
We thus have to show that the sum is convergent.
We have $|g^{(\alpha)}(t)| < \widehat{C}_1 |t|$ for $|t|<r/2$ since by assumption $g^{(\alpha)}(0)=0$.
Let $t$ in $\widetilde{r} \Delta_0(r,R,\phi)$ be fixed.
Since the $\sum_{k\in \Lambda } e^{-\eps(k) }$ is convergent, there are only finitely many $k\in \Lambda^d$ with $|t|e^{-\eps(k)} > \frac{1}{2} r$.
We thus get
\begin{align}
|g^{(\alpha,\xi)} (t)|
=
\left|
\sum_{k\in \Lambda}  g^{(\alpha)}(e^{-\eps(k)}t)
\right|
\leq
\widehat{C}_2  + \sum_{k\in \Lambda^d} \widehat{C}_1 |e^{-\eps(k)}t|
\leq
\widehat{C}_2 +  \widehat{C}_1 |t| \sum_{k\in \Lambda}  |e^{-\eps(k)}| < \infty.
\end{align}
This proves that $g^{(\alpha,\xi)} (t)$ is holomorphic in $\widetilde{r}\Delta_0(r,R,\phi)$.
The other statements are clear.
\end{proof}
We now apply Theorem~\ref{thm_hwang} to obtain:
\begin{theorem}
\label{thm_limit_theorem_cycles_alpha_xi}
Let $g^{(\alpha)}$ be of class $\mathcal{F}(\alpha,r)$ and $\widetilde{r},A$ be as in Lemma~\ref{lem_g_alpha_xi_analytic}.
We then have for each $b\in\N$
\begin{align}
  \left(C_1^{(n)}, C_2^{(n)},  \cdots C_b^{(n)} \right)
  \stackrel{d}{\to}
  \left(Y_1,\cdots,Y_b\right)
\end{align}
with $Y_1,\cdots,Y_b$ independent Poisson distributed random variables with
\begin{align}
\E{Y_m} = \frac{(r\widetilde{r})^m}{m}  \left(e^{-\alpha_m} \sum_{k\in \Lambda} e^{-\eps(k) m} \right)
\end{align}
\end{theorem}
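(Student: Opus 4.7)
The plan is to run the same three-step template used for Corollary~\ref{cor_limit_theorem_cycles}: write down the bivariate generating function of the joint characteristic function, extract coefficients via Hwang's Theorem~\ref{thm_hwang}, divide by the asymptotics of $h_n^{(\alpha,\xi)}$, and conclude with L\'evy's continuity theorem. The generating function side is already done for us in \eqref{eq_Cm_gen_ueltschi}, and the analytic input we need is Lemma~\ref{lem_g_alpha_xi_analytic}.

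More concretely, I set
\begin{align*}
S(t) := \prod_{m=1}^b \exp\!\left((e^{is_m}-1)\,\frac{t^m}{m}\left(e^{-\alpha_m}\sum_{k\in\Lambda}e^{-\eps(k)m}\right)\right),
\end{align*}
so that by \eqref{eq_Cm_gen_ueltschi} the generating series of $h_n^{(\alpha,\xi)}\,\EF{e^{i\sum_{m=1}^b s_m C_m^{(n)}}}$ equals $G^{(\alpha,\xi)}(t)\,S(t) = \exp(g^{(\alpha,\xi)}(t))\,S(t)$. Since each inner sum $\sum_{k\in\Lambda}e^{-\eps(k)m}$ is finite for every $m\geq 1$ (only finitely many $k$ satisfy $e^{-\eps(k)}>1$, by convergence of $\sum_k e^{-\eps(k)}$), $S(t)$ is entire. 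By Lemma~\ref{lem_g_alpha_xi_analytic}, $g^{(\alpha,\xi)}$ is of class $\mathcal{F}(\widetilde{r}r,A\alpha)$, so Theorem~\ref{thm_hwang} applies with this $S$, radius $\widetilde{r}r$, and index $A\alpha$, yielding
\begin{align*}
[t^n]\bigl(e^{g^{(\alpha,\xi)}(t)}S(t)\bigr) = \frac{e^{K}\,n^{A\alpha-1}}{(\widetilde{r}r)^n}\left(\frac{S(\widetilde{r}r)}{\Gamma(A\alpha)}+O\!\left(\frac{1}{n}\right)\right).
\end{align*}

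Next, applying Lemma~\ref{lem_asymptotic_hn} to $g^{(\alpha,\xi)}$ (legitimate because $\sum h_n^{(\alpha,\xi)}t^n = \exp(g^{(\alpha,\xi)}(t))$ by \eqref{eq_hn_gen_ueltschi}) gives $1/h_n^{(\alpha,\xi)} \sim (\widetilde{r}r)^n\Gamma(A\alpha)/(e^{K}n^{A\alpha-1})$, and dividing yields
\begin{align*}
\EF{\exp\!\left(i\sum_{m=1}^b s_m C_m^{(n)}\right)} = S(\widetilde{r}r) + O(1/n) \longrightarrow \prod_{m=1}^b \exp\!\left((e^{is_m}-1)\frac{(\widetilde{r}r)^m}{m}e^{-\alpha_m}\sum_{k\in\Lambda}e^{-\eps(k)m}\right).
\end{align*}
The right-hand side is precisely the joint characteristic function of independent Poisson variables $Y_1,\dots,Y_b$ with the claimed parameters $\E{Y_m}=\frac{(r\widetilde{r})^m}{m}(e^{-\alpha_m}\sum_{k\in\Lambda}e^{-\eps(k)m})$, so L\'evy's continuity theorem finishes the proof.

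The only genuine obstacle is verifying that $S(t)$ is holomorphic in a neighbourhood of $\{|t|\leq \widetilde{r}r\}$, i.e. that the coefficients in the exponent are finite; this is the point noted above and is immediate once one observes that $\{k\in\Lambda:\eps(k)<0\}$ is finite. Everything else is bookkeeping parallel to Theorem~\ref{thm_cycle_numbers_generating}; in particular one never needs to re-derive the singularity analysis, since Lemma~\ref{lem_g_alpha_xi_analytic} has already packaged $g^{(\alpha,\xi)}$ into class $\mathcal{F}$. Should one wish, the same argument with the error term from Theorem~\ref{thm_hwang} tracked through would additionally supply a rate of convergence $O(1/n)$ for the joint probabilities, exactly as in Corollary~\ref{cor_limit_theorem_cycles}.
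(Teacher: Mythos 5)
Your proposal is correct and follows exactly the route the paper takes: the paper's own proof of Theorem~\ref{thm_limit_theorem_cycles_alpha_xi} simply says to repeat the argument of Theorem~\ref{thm_cycle_numbers_generating}, feeding in the generating function \eqref{eq_Cm_gen_ueltschi} from Theorem~\ref{thm_gen_for_alpha_xi} and the analyticity statement of Lemma~\ref{lem_g_alpha_xi_analytic}, which is precisely what you carry out (including the correct observation that the exponent of $S(t)$ has finite coefficients because only finitely many $k\in\Lambda$ have $\eps(k)<0$). Your added remark that the same bookkeeping yields an $O(1/n)$ rate for the joint probabilities is consistent with Corollary~\ref{cor_limit_theorem_cycles} and goes slightly beyond what the paper states here.
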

\begin{proof}
The proof of this theorem is the same as the proof of Theorem~\ref{thm_cycle_numbers_generating}
One only has to use Theorem~\ref{thm_gen_for_alpha_xi} and Lemma~\ref{lem_g_alpha_xi_analytic}.
We therefore omit the details.
\end{proof}
\begin{theorem}
\label{thm_limit_theorem_tot_cycles_alpha_xi}
Let $g^{(\alpha)}$ be of class $\mathcal{F}(\alpha,r)$ and $\widetilde{r},A$ be as in Lemma~\ref{lem_g_alpha_xi_analytic}.
Then $K_{0n}$ converges in the strong mod-Poisson sense with parameters $A\alpha \log(n)$  with limiting function $\frac{\Gamma(\alpha)}{\Gamma(\alpha e^{is})}$.

\end{theorem}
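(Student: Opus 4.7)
The plan is to reduce this statement to exactly the same computation that established Theorem~\ref{thm_mod_poisson_convergence}, with the pair $(r,\theta)$ replaced by $(\widetilde{r}r, A\alpha)$. Indeed, Lemma~\ref{lem_g_alpha_xi_analytic} tells us that $g^{(\alpha,\xi)}(t)$ is itself of class $\mathcal{F}(\widetilde{r}r, A\alpha)$, so it admits an expansion
\begin{align*}
  g^{(\alpha,\xi)}(t) = A\alpha \log\!\left(\tfrac{1}{1-t/(\widetilde{r}r)}\right) + \widetilde{K} + O\bigl(t - \widetilde{r}r\bigr)
\end{align*}
for some constant $\widetilde{K}$. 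Combined with the identity \eqref{eq_K0n_gen_ueltschi},
\begin{align*}
  \sum_{n=0}^\infty h_n^{(\alpha,\xi)} \E{\exp(w K_{0n})}\, t^n = \exp\!\bigl(w\, g^{(\alpha,\xi)}(t)\bigr),
\end{align*}
we are in precisely the setting of Lemma~\ref{lem_generating_K0n}, only for the new generating series.

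Concretely, I would first apply Theorem~\ref{thm_hwang} with $S(t,w)\equiv 1$ to $F(t,w)=\exp\bigl(w\, g^{(\alpha,\xi)}(t)\bigr)$, substituting $w=e^{is}$, to obtain
\begin{align*}
  [t^n]\exp\!\bigl(e^{is} g^{(\alpha,\xi)}(t)\bigr) = \frac{e^{\widetilde{K} e^{is}}\, n^{A\alpha\, e^{is}-1}}{(\widetilde{r}r)^n}\!\left(\frac{1}{\Gamma(A\alpha\, e^{is})} + O\!\left(\tfrac{1}{n}\right)\right),
\end{align*}
uniformly for $s$ in any compact subset of $\R$. Next I would invoke Lemma~\ref{lem_asymptotic_hn} applied to $g^{(\alpha,\xi)}$ to get $h_n^{(\alpha,\xi)} \sim e^{\widetilde{K}} n^{A\alpha-1}\Gamma(A\alpha)^{-1} (\widetilde{r}r)^{-n}$. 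Dividing these two asymptotics yields, locally uniformly in $s$,
\begin{align*}
  \E{\exp(is\, K_{0n})} \;\sim\; n^{A\alpha(e^{is}-1)}\, e^{\widetilde{K}(e^{is}-1)}\, \frac{\Gamma(A\alpha)}{\Gamma(A\alpha\, e^{is})}.
\end{align*}

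Multiplying by $\exp\bigl((A\alpha\log n)(1-e^{is})\bigr) = n^{-A\alpha(e^{is}-1)}$ kills the divergent power of $n$, and the remaining expression converges to a continuous function of $s$ equal to $1$ at $s=0$. This is the definition of mod-Poisson convergence with parameters $A\alpha\log n$ in the sense of Definition~\ref{def_mod_poisson_convergence}. The leftover factor $e^{\widetilde{K}(e^{is}-1)}$ is absorbed into the limiting function; equivalently, one could shift the parameters to $A\alpha\log n + \widetilde{K}$, in which case the limit is exactly $\Gamma(A\alpha)/\Gamma(A\alpha\, e^{is})$, matching the remark following Theorem~\ref{thm_mod_poisson_convergence}.

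The main obstacle is essentially nil: once Lemma~\ref{lem_g_alpha_xi_analytic} certifies that $g^{(\alpha,\xi)}$ is of class $\mathcal{F}$, the rest is a transcription of the proof of Theorem~\ref{thm_mod_poisson_convergence}. The only genuine subtlety was the earlier verification that the infinite sum $\sum_{k\in\Lambda} g^{(\alpha)}(e^{-\eps(k)}t)$ defines a function holomorphic in the relevant $\Delta_0$-domain, and that singular contributions at $t = \widetilde{r}r$ come only from the $A$ lattice points realising the minimum $\widetilde{r}$; that work has already been done.
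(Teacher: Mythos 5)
Your proposal is correct and follows essentially the same route as the paper: the paper proves this theorem implicitly by the same transcription of the argument for Theorem~\ref{thm_mod_poisson_convergence}, i.e. combining the identity \eqref{eq_K0n_gen_ueltschi} with Lemma~\ref{lem_g_alpha_xi_analytic}, Theorem~\ref{thm_hwang} and Lemma~\ref{lem_asymptotic_hn}. Your handling of the constant $\widetilde{K}$ (absorbing $e^{\widetilde{K}(e^{is}-1)}$ into the limiting function or shifting the parameters) and your limiting function $\Gamma(A\alpha)/\Gamma(A\alpha e^{is})$ are the consistent conclusions of that computation, the statement in the paper with $\Gamma(\alpha)/\Gamma(\alpha e^{is})$ being best read with the factor $A$ included.
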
 
\bibliographystyle{acm}
\bibliography{literatur}

\end{document}